\newcommand*\myat{{\fontfamily{ptm}\selectfont @}}
\def \R{{\mathbb R}}
\def \P{{\mathcal P}}
\def \M{{\mathcal M}}
\def \N{{\mathbb N}}
\def \ds{\displaystyle}
\renewenvironment{proof}{\noindent{\bfseries Proof.}}{\qed} 
\newtheorem{theorem}{Theorem}[section]
\newtheorem{lemma}[theorem]{Lemma}
\newtheorem{corollary}[theorem]{Corollary}
\newtheorem{remark}[theorem]{Remark}
\numberwithin{equation}{section}
\begin{document}

\title{Group invariant solutions for the planar Schr\"{o}dinger-Poisson system
}


\author{Ganglong Zhou\footnote{This research did not receive any specific grant from funding agencies in the public, commercial, or
not-for-profit sectors.}\;\;\thanks{School of Mathematical Sciences, East China Normal University, Shanghai, PR China, 200241 (52195500021\myat stu.ecnu.edu.cn)}
}



\date{}
\providecommand{\keywords}[1]{\small{\textbf{Key words.} #1}}


\maketitle

\begin{abstract}

This paper is concerned with the following planar Schr\"{o}dinger-Poisson system
\begin{equation*}
\begin{cases}
  -\triangle{u}+V(x)u+\phi{(x)}|u|^{p-2}u=f(x,u),&\text{in $\mathbb{R}^{2}$},\\
  \triangle{\phi}=|u|^{p},&\text{in $\mathbb{R}^{2}$},
\end{cases}
\end{equation*}
where $p\geq2$ is a constant, $V(x)$ and $f(x,t)$ are continuous, mirror symmetric or rotationally periodic functions. By assuming that the nonlinearity $f(x,t)$ has critical exponential growth w.r.t.~$H^1(\R^2)$, we obtain a nontrivial solution or a ground state solution of Nehari type to the above system. Our results extend previous works of Cao, Dai and Zhang \cite{Cao_Dai_Zhang}, Chen and Tang \cite{Chen_Tang2020-2}. We handle more general nonlinearities $f$ with weaken constraint at infinity, and we assume only the (AR) type condition to take place of the monotonicity assumption (Theorem \ref{thm1}). We considered all the cases $p\geq2$, and we show the existence of solutions with multiple types of symmetry. As in \cite{Chen_Tang2020-2}, we adopt a version of mountain pass structure which provides a Cerami sequence (Lemma \ref{lem9}), with two innovative points. First, we make a key observation for the sign of a crucial part of the energy functional corresponding to the nonlocal term $\phi|u|^{p-2}u$ (Lemma \ref{lem6}), and secondly we adopt a new Moser type functions (Lemma \ref{lem10}) to ensure the boundedness and compactness of the Cerami sequence. Moreover, our approach works also for the subcritical growth case (Theorem \ref{R15}), and generalizes recent works \cite{Liu_Radulescu_Tang_Zhang,Cao_Dai_Zhang,Chen_Tang2020-2}.
\end{abstract}

\begin{keywords}
Planar Schr\"{o}dinger-Poisson system; Cerami sequence; Critical exponential growth; Mirror symmetry/Rotationally periodicity
\end{keywords}

\section{Introduction}
\label{sec:intro}

The present paper is concerned with the existence of solution to the planar Schr\"{o}dinger-Poisson system
\begin{equation*}
\begin{cases}
  -\triangle{u}+V(x)u+\phi{(x)}|u|^{p-2}u=f(x,u),&\text{in $\mathbb{R}^{2}$},\\
  \triangle{\phi}=|u|^{p},&\text{in $\mathbb{R}^{2}$}.
\end{cases}
\leqno{(P)}
\end{equation*}
where $p\geq2$, $V,f$ are continuous, mirror symmetric or rotationally periodic functions; and $f(x,t)$ has exponential critical growth in the Trudinger-Moser sense (see \cite{Cao}).

In last decades, considerable attentions have been paid for the following Schr\"{o}dinger-Poisson system:
\begin{equation*}
\begin{cases}
  -\triangle{u}+V(x)u+K(x)\phi{(x)}|u|^{p-2}u=f(x,u),&\text{in $\mathbb{R}^N$},\\
  \triangle{\phi}=K(x)|u|^{p},&\text{in $\mathbb{R}^N$}.
\end{cases}
\leqno{(SP)}
\end{equation*}
with various conditions on the parameters $p,N$ and functions $V,K,f$. In three-dimensional case $N = 3$, this kind of system arises in many contexts of physics, for example, in quantum mechanics \cite{Benguria_Brezis_Lieb1981,Catto_Lions1993,Lieb1981} and semiconductor theory \cite{Benci_Fortunato1998,Benci_Fortunato2002,Lions1984,Markowich_Ringhofer_Schmeiser1984}. In \cite{Benci_Fortunato1998}, the equation $(SP)$ was introduced as a model describing solitary waves, for nonlinear stationary
equations of Schr\"{o}dinger type interacting with an electrostatic field, where the unknown functions $u$ and $\phi$ denote wave function for particles and potential respectively. Mathematically speaking, for each $u\in H^1(\mathbb{R}^N)$, the second equation in $(SP)$ determines a unique solution $\phi_u$ in $D^{1,2}(\mathbb{R}^N)$, i.e.~$\phi_u:=\Gamma_N\ast|u|^p$, where
\begin{equation*}
  \Gamma_N=
  \begin{cases}
    \frac{1}{2\pi}\ln|x|,&\text{$N=2$},\\
    \frac{1}{N(2-N)\omega_N}|x|^{2-N},&\text{$N\geq3$},
  \end{cases}
\end{equation*}
and $\omega_N$ is the volume of the unit ball in $\mathbb{R}^N$. When $N\geq3$, many minimization techniques were used to the system $(SP)$. We refer the readers to \cite{Ambrosetti_Ruiz2008,Cerami_Vaira2010,Ruiz2006,Zhao_Zhao2008} and references therein.

In the following, let us focus on the two dimensional case. In 2008, Stubbe \cite{Stubbe} considered the equation
\begin{equation*}
  -\triangle{u}+\lambda u+\phi{(x)}u=0,\;\; \triangle{\phi}=u^2,\quad \text{in $\mathbb{R}^{2}$},
\leqno{(S1)}
\end{equation*}
where $\lambda\in\mathbb{R}$ is a constant. He set up a variational framework for $(S1)$ with a subspace $Z$ of $H^1(\mathbb{R}^2)$:
$$Z:=\left\{u\in H^1(\mathbb{R}^2):\int_{\mathbb{R}^2}\ln(1+|x|)u^2dx<\infty\right\}.$$
He proved that there exists a unique radial ground state solution for any $\lambda\geq0$. In addition, he proved
that there exists a negative number $\lambda^{\ast}$ such that for any $\lambda\in(\lambda^{\ast},0)$ there are two radial ground states with different $L^2$ norms.

In 2016, Cigolani and Weth \cite{Cingolani_Weth} considered the equation $(P)$ with $p = 2$ and $f(x, u) = b|u|^{q-2}u$, that is
\begin{equation*}
\begin{cases}
  -\triangle{u}+V(x)u+\phi{(x)}u=b|u|^{q-2}u,&\text{in $\mathbb{R}^{2}$},\\
  \triangle{\phi}=u^2,&\text{in $\mathbb{R}^{2}$},
\end{cases}
\leqno{(S2)}
\end{equation*}
where $b\geq0,q\geq4$ are constants and $V\in C(\mathbb{R}^2,(0,\infty))$ is $\mathbb{Z}^2$ periodic.
Using the concentration-compactness theory, they proved that the equation $(S2)$ has a ground state $u\in X_2$ and a solution sequence $\{u_n\}_n\subset X_2$ such that $\lim_{n\rightarrow\infty}J(u_n)=\infty$. Here
$$X_2:=\left\{u\in H^1(\mathbb{R}^2):\int_{\mathbb{R}^2}\Big[|\nabla u|^2+V(x)u^2+\ln(1+|x|)u^2\Big]dx<\infty\right\}$$
and $J$ is the energy functional associated to $(S2)$.

 In 2021, Cao et al. \cite{Cao_Dai_Zhang} generalized the above consideration for $p \geq 2$,
\begin{equation*}
\begin{cases}
  -\triangle{u}+V(x)u+ \phi{(x)}|u|^{p-2}u=b|u|^{q-2}u,&\text{in $\mathbb{R}^{2}$},\\
  \triangle{\phi}=|u|^p,&\text{in $\mathbb{R}^{2}$},
\end{cases}
\leqno{(S3)}
\end{equation*}
where $b\geq0,p\geq2,q\geq2p$ are constants and $V\in C(\mathbb{R}^2,(0,\infty))$ is $\mathbb{Z}^2$ periodic. With similar method in \cite{Cingolani_Weth}, they obtained the existence of a positive ground state solution of $(S3)$ in $X_p$, where
  $$X_p:=\left\{u\in H^1(\mathbb{R}^2):\int_{\mathbb{R}^2}\Big[|\nabla u|^2+V(x)u^2+\ln(1+|x|)|u|^p\Big]dx<\infty\right\}.$$

In 2020, Chen and Tang \cite{Chen_Tang2020-2} considered the equation $(P)$ with $p = 2$, i.e.
\begin{equation*}
\begin{cases}
  -\triangle{u}+V(x)u+\phi{(x)}u=f(x,u),&\text{in $\mathbb{R}^{2}$},\\
  \triangle{\phi}=u^2,&\text{in $\mathbb{R}^{2}$},
\end{cases}
\leqno{(S4)}
\end{equation*}
where $V\in C(\mathbb{R}^2,[0,\infty))$ is axially symmetry and $f\in C(\mathbb{R}^3)$ is of subcritical or critical exponential growth in the sense of Trudinger-Moser. More precisely, we say that $f(x,t)$ has subcritical exponential growth at $t=\pm\infty$ if it verifies
\begin{itemize}
\item[(F1')] (subcritical case) $\sup_{x\in\mathbb{R}^2,|s|\leq A}|f(x,s)|<+\infty$ for every $A > 0$ and
\begin{equation*}
\lim_{|t|\rightarrow\infty}\frac{|f(x,t)|}{e^{\alpha t^2}}=0,\;\mbox{uniformly in}\;\mathbb{R}^2\;\mbox{for all}\;\alpha>0;
\end{equation*}
\end{itemize}
and $f(x,t)$ is said having the critical exponential growth at $t=\pm\infty$ if it verifies
\begin{itemize}
\item[(F1)] (critical case) $\sup_{x\in\mathbb{R}^2,|s|\leq A}|f(x,s)|<+\infty$ for every $A > 0,$ and there exists $\alpha_0>0$ such that
\begin{equation*}
\lim_{|t|\rightarrow\infty}\frac{|f(x,t)|}{e^{\alpha t^2}}=0,\;\;\mbox{uniformly in}\;\mathbb{R}^2\;\mbox{for all}\;\alpha>\alpha_0
\end{equation*}
while
\begin{equation*}
\lim_{|t|\rightarrow\infty}\frac{|f(x,t)|}{e^{\alpha t^2}}=+\infty,\;\mbox{uniformly in}\;\mathbb{R}^2\;\mbox{for all}\;\alpha<\alpha_0.
\end{equation*}
\end{itemize}
This notion of criticality was introduced by Adimurthi and Yadava \cite{Adimurthi_Yadava}.

For the critical growth case, Chen and Tang \cite{Chen_Tang2020-2} established the existence of a ground state solution for $(S4)$ by assuming a series of technical conditions on $V$ and $f$:
\begin{itemize}
\item[(V0)]  $V\in C(\mathbb{R},[0,\infty))$ and $\liminf_{|x|\rightarrow\infty} V(x)>0$;
\item[(CF1)] $V(x):=V(x_1,x_2)=V(|x_1|,|x_2|)$ for all $x\in\mathbb{R}^2$, $f(x,t):=f(x_1,x_2,t)=f(|x_1|,|x_2|,t)$ for all $(x,t)\in\mathbb{R}^2\times\mathbb{R}$;
\item[(CF2)] $f(x,t)t>0$ for all $(x,t)\in\mathbb{R}^2\times\mathbb{R}\backslash\{0\}$, and there exists $M_0>0$ and $t_0>0$ such that
$$F(x,t)\leq M_0|f(x,t)|,\;\;\forall\; x\in\mathbb{R}^2,|t|\geq t_0,$$
where $F(x,t):= \ds\int_0^tf(x,s)ds;$
\item[(CF3)] There exists $\kappa>0$, $\liminf_{|t|\rightarrow\infty}\frac{t^2F(x,t)}{e^{\alpha_0 t^2}}\geq\kappa>\frac{2}{\alpha_0^2\rho^2}$ uniformly on $x\in\mathbb{R}^2$, where $\rho\in(0,1/2)$ satisfies $\rho^2\max_{|x|\leq\rho}V(x)\leq1$;
\item[(CF4)] $\frac{f(x,t)-V(x)t}{|t|^{3}}$ is non-decreasing on $t\in(-\infty,0)$ and $t\in(0,\infty)$.
\end{itemize}

Here we will prove the existence of a nontrivial solution to the equation $(P)$, not only for all $p \geq 2$, but also for much more general nonlinearities $f$ and more general potentials $V$ (see Remark \ref{newR} below).

To describe our main results, we introduce the following notations: Let us view $\mathbb{R}^2$ as $\mathbb{C}$, Let $k \in \N, k\ge 2$, we say that $v \in {\mathcal P}_k$ if $v(ze^{2\pi i/k})=v(z)$ over $\mathbb{C}$. We define
$$E_{k,p}:= X_p\cap \P_k,\quad \mathcal{V}_{k,1}:= C(\mathbb{C})\cap \P_k$$
$$\mathcal{F}_{k,1}:= \{f\in C(\mathbb{C}\times\mathbb{R}):f(\cdot,t) \in \P_k, \forall\;t\in\mathbb{R}\}.$$
We say that $v$ is mirror symmetric, denoted by $v \in \M$ if $v(\overline z)= v(z)$ in ${\mathbb C}$. Let
$$T_{k,p} := E_{k,p}\cap \M, \quad \mathcal{V}_{k,2}:= \mathcal{V}_{k,1} \cap \M, $$
$$\mathcal{F}_{k,2}:=\{f\in\mathcal{F}_{k,1}: f(\cdot,t) \in \M, \forall\; t\in\mathbb{R}\}.$$

Our main result is stated as follows.
\begin{theorem}
\label{thm1}
Let $p\geq2$, V and f satisfy (V0) and (F1). Assume that
\begin{itemize}
\item[(VF)] $V\in\mathcal{V}_{k,1}\;\mbox{and}\;f\in\mathcal{F}_{k,1}$ with $k\geq4$ or $V\in\mathcal{V}_{k,2}\;\mbox{and}\;f\in\mathcal{F}_{k,2}$ with $k\geq2$;
\item[(F2)] $f(x,t)t \ge 0$ for all $(x,t)\in\mathbb{R}^2\times\mathbb{R}\backslash\{0\}$, and there exists $M_0>0$ and $t_0>0$ such that $F(x,t)\leq M_0|f(x,t)|$ for $x\in\mathbb{R}^2$, $|t|\geq t_0;$
\item[(F3)] There exists $q\in\mathbb{R}$ such that $\liminf_{|t|\rightarrow\infty}\frac{|t|^qF(x,t)}{e^{\alpha_0 t^2}}=+\infty;$
\item[(F4)] (AR) type condition: There exist $\mu_1 > 2$, $\mu_2 <\frac{\mu_1}{2}-1$ such that
$$f(x,t)t\geq\mu_1F(x,t)-\mu_2V(x)t^2,\;\;\forall\;(x,t)\in\mathbb{R}^3.$$
\item[(F5)] If $p = 2$, $f(t) = o(t)$ as $t\rightarrow0$ uniformly on $\mathbb{R}^2;$ and if $p > 2$, $f(t)= O(t^{s_0})$ with $s_0 > 1$ as $t\rightarrow0$ uniformly on $\mathbb{R}^2.$
\end{itemize}
\noindent Then $(P)$ has a nontrivial solution $\bar{u}$. Moreover, $\bar{u}\in E_{k,p}$ if\; $V\in\mathcal{V}_{k,1}$ and $f\in\mathcal{F}_{k,1}$; $\bar{u}\in T_{k,p}$ if $V\in\mathcal{V}_{k,2}$ and $f\in\mathcal{F}_{k,2}$.
\end{theorem}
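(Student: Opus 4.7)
The plan is to apply the principle of symmetric criticality together with a mountain pass argument on the invariant subspace $E_{k,p}$ (or $T_{k,p}$), using the symmetry as a substitute for the lack of compactness of the embedding $X_p\hookrightarrow L^q(\mathbb{R}^2)$. To $(P)$ I associate the energy functional
\begin{equation*}
J(u) = \frac{1}{2}\int_{\mathbb{R}^2}\bigl(|\nabla u|^2 + V(x)u^2\bigr)\,dx + \frac{1}{2p}\int_{\mathbb{R}^2}\phi_u(x)|u(x)|^p\,dx - \int_{\mathbb{R}^2}F(x,u)\,dx,
\end{equation*}
with $\phi_u = \Gamma_2\ast|u|^p$. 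As in Cingolani--Weth and Cao--Dai--Zhang, the nonlocal term is split using $\ln|x-y|=\ln(1+|x-y|)-\ln\frac{1+|x-y|}{|x-y|}$ so that $J\in C^1(X_p,\mathbb{R})$, and hypothesis (VF) makes $E_{k,p}$ and $T_{k,p}$ invariant. It therefore suffices to produce a nontrivial critical point of the restriction of $J$ to the relevant symmetric subspace.

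The next step is to verify a mountain pass geometry on this subspace. Near the origin, condition (F5) combined with the Trudinger--Moser inequality gives a sphere on which $J\geq\alpha>0$; along a rescaled Moser-type function from Lemma \ref{lem10} lying in the invariant subspace, condition (F3) forces $J(tw)\to-\infty$ as $t\to\infty$. The variant of the mountain pass theorem in Lemma \ref{lem9} then yields a Cerami sequence $\{u_n\}$ at the mountain pass level $c>0$.

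To extract a nontrivial critical point I first prove that $\{u_n\}$ is bounded. The standard device is to combine $J(u_n)\to c$ and $\|J'(u_n)\|(1+\|u_n\|)\to 0$ with the (AR)-type condition (F4); however the coefficient that naturally appears in front of the nonlocal term $\int\phi_{u_n}|u_n|^p$ may be negative when $p$ is small relative to $\mu_1$, and this is precisely where the sign observation of Lemma \ref{lem6} intervenes. It shows that this indefinite logarithmic double integral has a favourable sign in the combination dictated by the Cerami condition, so the (AR) bootstrap closes and $\|u_n\|_{X_p}$ is controlled without the Chen--Tang monotonicity (CF4). Weak convergence $u_n\rightharpoonup\bar u$ in $E_{k,p}$ (respectively $T_{k,p}$) is then upgraded to strong local convergence by the compactness produced by the symmetry, while the orbit structure rules out escape of mass to infinity; since $E_{k,p}$ and $T_{k,p}$ are weakly closed, the symmetry assertion for $\bar u$ comes for free.

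The main obstacle is the critical exponential growth. Standard Trudinger--Moser compactness for $f(x,u_n)\to f(x,\bar u)$ in $L^1_{\mathrm{loc}}$ requires the mountain pass level $c$ to lie strictly below an explicit threshold of the form $\frac{2\pi}{\alpha_0}$ (or an analogue depending on $p$). This is the role of the new Moser test functions in Lemma \ref{lem10}: they must simultaneously be group invariant, concentrate sharply enough to exploit (F3), and keep the nonlocal contribution to $\max_{t>0}J(tw_n)$ under control via Lemma \ref{lem6}. Once this level estimate is in hand, a Brezis--Lieb-type decomposition of both the nonlinear and the nonlocal energies identifies $\bar u$ as a critical point of $J$ with $J(\bar u)>0$, and hence $\bar u\not\equiv 0$.
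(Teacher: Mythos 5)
Your proposal is correct and follows essentially the same route as the paper: the $A_1-A_2$ splitting of the logarithmic kernel, the sign observation $I_0\le 0$ (Lemma \ref{lem6}) to close the (AR)-type boundedness estimate for the Cerami sequence, the Moser-type level estimate $\max_{t\ge 0}\Phi(t\omega_{n_0})<\frac{2\pi}{\alpha_0}$ (Lemma \ref{lem10}), symmetry-induced compactness, and the principle of symmetric criticality to pass from $E_{k,p}'$ (or $T_{k,p}'$) to $X_p'$. The only point you gloss over is the precise mechanism by which the symmetry yields compactness: in the paper it is not an orbit or mass-escape argument but the lower bound $A_1(|u|^p,|v|^p)\ge C_k\|u\|_{\ast}^p\|v\|_p^p$ valid on $E_{k,p}$ and $T_{k,p}$ (Lemma \ref{lem8}), which, combined with non-vanishing of $\|u_n\|_p$ and the boundedness of $I_1(u_n)$, controls $\|u_n\|_{\ast}$ and hence gives the compact embedding into $L^s(\mathbb{R}^2)$.
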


\begin{remark}
\label{newR}
  Comparing to Chen and Tang \cite[Theorem 4]{Chen_Tang2020-2}, we have generalized or weaken all the assumptions (CF1)-(CF4), to (VF) and (F2)-(F4) respectively. More precisely,
  \begin{itemize}
  \item (CF1) means just $V\in\mathcal{V}_{2,2}$ and $f\in\mathcal{F}_{2,2}$, hence is a special case of (VF);
  \item The condition (CF4) is a significant improvement of (F4) since the (AR) type condition does not require any monotonicity assumption;
  \item The condition (F3) is also less restrictive than (CF3) for the behavior of $f$ at infinity;
  \item (F2) improves slightly (CF2) where $f(x,t)t>0$ is replaced by $f(x,t)t\geq0$;
  \end{itemize}
 Finally, \cite{Chen_Tang2020-2} used implicitly the condition $f(t) = o(t)$ with $p =2$ in (F5), seeing the proof of Lemma 2.6 there.
\end{remark}

With a similar condition to (CF4), we can obtain the least energy solution on the corresponding Nehari manifold.
\begin{theorem}
\label{thm3}
Let $p\geq2$, $V$ and $f$ satisfy (V0), (VF), (F1)-(F3) and (F5). Furthermore, we assume that
\begin{itemize}
\item[(F4')] For any $p\geq2$, $g_p(x,t)$ is non-decreasing on $t\in(-\infty,0)$ and $t\in(0,\infty)$ respectively, where
\begin{equation*}
g_p(x,t):=
  \begin{cases}
    \frac{f(x,t)-V(x)t}{|t|^{2p-1}}, & p=2 ,\\
    \frac{f(x,t)-\mu V(x)t}{|t|^{2p-1}}, & p>2, \mu < 1.
  \end{cases}
\end{equation*}
\end{itemize}
Then $(P)$ has a nontrivial solution $\bar{u}$. Moreover, if $V\in\mathcal{V}_{k,1}$ and $f\in\mathcal{F}_{k,1}$, then $\bar{u}\in E_{k,p}$ satisfies $$\Phi(\bar{u}) = \min_{\mathcal{N}_1}\Phi;$$
if $V\in\mathcal{V}_{k,2}$ and $f\in\mathcal{F}_{k,2}$, then $\bar{u}\in T_{k,p}$ satisifies $$\Phi(\bar{u}) = \min_{\mathcal{N}_2}\Phi.$$ Here the symbols $\Phi, \mathcal{N}_1,\mathcal{N}_2$ are given by \eqref{Phi}, \eqref{Ner} in the next section.
\end{theorem}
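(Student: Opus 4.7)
My plan is to prove Theorem \ref{thm3} by combining the mountain pass scheme of Theorem \ref{thm1} with a Nehari manifold argument, the monotonicity assumption (F4') being precisely what is needed for uniqueness of the Nehari projection and for the identification of the mountain pass level with $\inf_{\mathcal{N}_i}\Phi$. The cornerstone is a fibering lemma: for every nonzero $u$ in the symmetric subspace $E_{k,p}$ (resp.\ $T_{k,p}$) there is a unique $t_u>0$ with $t_u u\in\mathcal{N}_i$, and $\Phi(t_u u)=\max_{t\geq 0}\Phi(tu)$. Setting $\gamma_u(t):=\Phi(tu)$ and decomposing $f(x,s)=\mu V(x)s+[f(x,s)-\mu V(x)s]$ with $\mu=1$ when $p=2$ and $\mu<1$ when $p>2$, a direct computation exploiting the monotonicity of $(f(x,s)-\mu V(x)s)/|s|^{2p-1}$ in (F4') yields
\begin{equation*}
\frac{\gamma_u'(t)}{t^{2p-1}}=\frac{1}{t^{2p-2}}\int_{\mathbb{R}^2}\bigl(|\nabla u|^2+(1-\mu)V(x)u^2\bigr)\,dx+\int_{\mathbb{R}^2}\phi_u|u|^p\,dx-I(u,t),
\end{equation*}
where $I(u,t)$ is non-decreasing in $t>0$. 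Since $1-\mu\geq 0$ and $\int|\nabla u|^2\,dx>0$, the right-hand side is strictly decreasing in $t>0$. Combined with $\gamma_u'(t)/t>0$ for small $t>0$ (from (F5)) and $\gamma_u(t)\to-\infty$ as $t\to\infty$ (from (F3)), this yields the unique $t_u$.

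The fibering step then gives $\inf_{\mathcal{N}_i}\Phi=c$, where $c$ is the mountain pass level of Lemma \ref{lem9} restricted to the symmetric subspace. Indeed, every $u\in\mathcal{N}_i$ lies on the admissible path $s\mapsto sTu$ with $T>0$ large (so $\Phi(Tu)<0$), giving $c\leq\Phi(u)$, while every admissible path must cross $\mathcal{N}_i$ at the point where $\Phi$ is maximized, giving the reverse inequality. I would then replay the proof of Theorem \ref{thm1}: Lemma \ref{lem9} produces a Cerami sequence in the symmetric subspace at level $c$; boundedness follows using (F4') in place of (F4) via the routine integration $f(x,t)t\geq 2pF(x,t)-\mu(p-1)V(x)t^2$ (for $p>2$, this yields (F4) strictly with $\mu_1=2p$, $\mu_2=\mu(p-1)$; for $p=2$, it gives the borderline case $\mu_1=4$, $\mu_2=1$, for which a refined argument exploiting the Nehari identity recovers the $\int Vu^2\,dx$ control); the symmetry-enforced compactness (as in the treatment of Lemma \ref{lem10}) then yields a nontrivial weak limit $\bar u$, which is a critical point of $\Phi$. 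Being nonzero, $\bar u$ lies on $\mathcal{N}_i$, so $\Phi(\bar u)=c=\min_{\mathcal{N}_i}\Phi$.

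The main obstacle is the fibering step. The sign-indefinite, nonlocal quantity $\int\phi_u|u|^p\,dx$ does not scale monotonically, so the standard Nehari monotonicity argument fails directly. The role of (F4') is precisely to group the linear piece $\mu V(x)u$ of the nonlinearity with the potential, leaving a strictly decreasing $t^{-(2p-2)}$ prefactor multiplying the positive combination $\int|\nabla u|^2+(1-\mu)\int V u^2\,dx$; this monotonicity dominates the sign of the nonlocal term and pins down $t_u$. In the borderline case $p=2$, where $\mu=1$ annihilates the $V$-contribution in this prefactor and (F4') does not strictly imply (F4), the strict decrease comes entirely from the $\int|\nabla u|^2$ prefactor, which is strictly positive for $u\neq 0$, and the borderline loss in the (AR)-type estimate is compensated by exploiting the Nehari identity directly on the Cerami sequence.
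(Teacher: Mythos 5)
Your proposal follows essentially the same architecture as the paper's proof: a Nehari fibering lemma giving a unique projection $t_u$ with $\Phi(t_uu)=\max_{t\ge0}\Phi(tu)$, a Cerami sequence at a level $c\le\inf_{\mathcal{N}_i}\Phi$, boundedness from the (AR)-type consequence $f(x,t)t\ge 2pF(x,t)-\mu(p-1)V(x)t^2$ of (F4'), reuse of the compactness machinery of Theorem \ref{thm1}, and the closing observation that the critical point $\bar u$ lies on $\mathcal{N}_i$ so that $\Phi(\bar u)=\min_{\mathcal{N}_i}\Phi$. The one genuine difference is the fibering step: you differentiate the fiber map and show that $t\mapsto\gamma_u'(t)/t^{2p-1}$ is strictly decreasing (the nonlocal term contributing only a $t$-independent constant after division by $t^{2p-1}$), whereas the paper proves the quantitative inequality $\Phi(u)\ge\Phi(tu)+\frac{1-t^{2p}}{2p}\langle\Phi'(u),u\rangle+\frac{t^{2p}-pt^2+p-1}{2p}\|u\|^2$ (its \eqref{54}) and extracts both the max property and uniqueness from it by a two-sided application; both hinge on the same monotonicity of $g_p$ and are equally valid, the paper's version being slightly more robust since it also feeds directly into the identity $m_1=\inf\max_t\Phi(tu)$. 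Two small points in your write-up deserve care. First, your claim that ``every admissible path must cross $\mathcal{N}_i$'' (the inequality $c\ge\inf_{\mathcal{N}_i}\Phi$) is not justified and, fortunately, not needed; moreover, since the mountain-pass class $\Gamma$ has a \emph{fixed} endpoint $e$, even the inequality $c\le\Phi(u)$ for all $u\in\mathcal{N}_i$ requires either letting $e$ vary along the ray through $u$ or, as the paper does in Lemma \ref{lem13}, running the mountain-pass construction for each near-minimizer $v_k\in\mathcal{N}_1$ and diagonalizing to produce one Cerami sequence at a level $c_*\le m_1$. Second, when replaying Step 1 of Theorem \ref{thm1} you also need the bound $\int_{\R^2}f(x,u_n)u_n\,dx\le C$ replacing \eqref{32}; the paper obtains it from the combination $\Phi(u_n)-\frac{p+\mu(1-p)}{2p}\langle\Phi'(u_n),u_n\rangle$ together with Lemma \ref{lem6}, and your outline should make this explicit. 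Neither point is a fatal gap, and your deferral of the borderline $p=2$ boundedness matches the paper's (which simply cites Chen--Tang).
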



Comparing to the approach in \cite[Theorem 4]{Chen_Tang2020-2}, we still adopt the consideration of Cerami sequence, but we make a key observation which is actually Lemma \ref{lem6} on the functional $I_0$. Furthermore, we construct also a new Moser type function to weaken the condition (CF3) to (F3). Clearly, our approach works also for subcritical case, which generalizes Theorem 1.2 in \cite{Chen_Tang2020-2}, see Theorem \ref{R15} in section 6.

This paper is organized as follows: In Section 2, we present some basic results, in particular we show that the energy functional corresponding to the nonlocal term $\phi|u|^{p-2}u$ is non positive, see Lemmas \ref{lem6}. In secction 3, we establish the mountain pass structure, using a new test function, see Lemma \ref{lem10} below. In sections 4-5, we give the proof of Theorems \ref{thm1} and \ref{thm3} respectively. In section 6, we discuss some open problems about 2-D Schr\"{o}dinger-Poisson equation, and state our existence result for the subcritical growth case.

\section{Preliminaries}
\label{sec:prelim}
In this section, we will give some preliminary definitions and basic facts. In the following, the letter $C$ denotes generic positive
constants and $\|\cdot\|_q$ denotes the standard norm in $L^q({\mathbb{R}^2})$.

The function space $X_p$ is a Banach space equipped with the norm
\begin{equation}
  \|u\|_{X_p}:=\|u\|+\|u\|_{\ast},
\end{equation}
where
\begin{equation}
  \|u\|_{\ast}:=\left(\int_{\mathbb{R}^2}\ln(1+|x|)|u|^{p}dx\right)^{\frac{1}{p}};
\end{equation}
while
\begin{equation}
\label{3}
  \|u\|:=\left(\int_{\mathbb{R}^2}\Big[|\nabla u|^2+V(x)u^{2}\Big]dx\right)^{\frac{1}{2}}
\end{equation}
is induced by the scalar product
\begin{equation}
  \langle u,v\rangle:=\int_{\mathbb{R}^2}(\nabla u\cdot\nabla v+V(x)uv)dx.
\end{equation}

We will use the following bilinear functionals introduced by Stubbe \cite{Stubbe}:
\begin{equation}
\label{5}
  A_{1}(u,v):=\int_{\mathbb{R}^2}\int_{\mathbb{R}^2}\ln(1+|x-y|)u(x)v(y)dxdy;
\end{equation}
\begin{equation}
\label{6}
  A_{2}(u,v):=\int_{\mathbb{R}^2}\int_{\mathbb{R}^2}\ln\left(1+\frac{1}{|x-y|}\right)u(x)v(y)dxdy;
\end{equation}
\begin{equation}
\label{7}
  A_{0}(u,v):=A_{1}(u,v)-A_{2}(u,v)=\int_{\mathbb{R}^2}\int_{\mathbb{R}^2}\ln(|x-y|)u(x)v(y)dxdy.
\end{equation}
By Hardy-Littlewood-Sobolev inequality (see \cite{Lieb1983}), there exists $C>0$ such that for any $u,v\in L^{4/3}(\mathbb{R}^2)$,
\begin{equation}
\label{HLS}
  |A_2(u,v)|\leq\int_{\mathbb{R}^2}\int_{\mathbb{R}^2}\frac{1}{|x-y|}u(x)v(y)dxdy\leq C\|u\|_{\frac{4}{3}}\|v\|_{\frac{4}{3}}.
\end{equation}
Corresponding to \eqref{5}-\eqref{7}, we define
\begin{equation}
\label{9}
  I_i(u):=A_{i}(|u|^{p},|u|^{p}),\quad i=0,1,2.
\end{equation}
The following bound for $I_2(u)$ is a direct consequence of \eqref{HLS}:
\begin{equation}
\label{10}
  |I_2(u)|\leq C\|u\|_{\frac{4p}{3}}^{2p},\;\;\forall\; u\in L^{\frac{4p}{3}}(\mathbb{R}^2),\;\forall\;p\geq 1.
\end{equation}
We can write the associated functional of the equation $(P)$ in the following form
\begin{equation}
\label{Phi}
  \Phi(u)=\frac{1}{2}\|u\|^2+\frac{1}{4p\pi}I_0(u)-\int_{\mathbb{R}^2}F(x,u)dx
\end{equation}
and the associated Nehari manifold of the functional \eqref{Phi} are
\begin{align}
\label{Ner}
\begin{split}
\mathcal{N}_1:=\{u\in E_{k,p}\backslash\{0\}:\langle \Phi'(u),u\rangle=0\},\\
\mathcal{N}_2:=\{u\in T_{k,p}\backslash\{0\}:\langle \Phi'(u),u\rangle=0\}.
\end{split}
\end{align}

Next, we state several useful lemmas.
\begin{lemma}
\label{M-T}
Let $u\in H^{1}(\mathbb{R}^2)$, then for any $\alpha > 0$,
  $$\int_{\mathbb{R}^2}\left(e^{\alpha u^2}-1\right)dx<\infty.$$
Furthermore, given $M>0$, $\alpha\in(0,4\pi)$, there exists a constant $C(M,\alpha)$ such that for all $u\in H^{1}(\mathbb{R}^2)$ satisfying $\|\nabla u\|_{2}\leq 1$, $\|u\|_2\leq M$, there holds
  $$\int_{\mathbb{R}^2}\left(e^{\alpha u^2}-1\right)dx<C(M,\alpha),$$
\end{lemma}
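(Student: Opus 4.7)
The plan is to reduce both parts to the classical Trudinger--Moser inequality on a disc, after an initial Schwarz symmetrization. Let $u^{\ast}$ denote the Schwarz symmetric-decreasing rearrangement of $|u|$. Rearrangement preserves $L^q$ norms and the integral of any radially increasing function of $|u|$ (in particular of $e^{\alpha u^2}-1$), while it does not increase the Dirichlet energy $\|\nabla u\|_2$. Thus, without loss of generality I may assume that $u\ge 0$ is radial and non-increasing, in which case
\[
  u(R)^2 \,\le\, \frac{\|u\|_2^2}{\pi R^2}, \qquad \forall\, R>0,
\]
simply because $u(R)^2\cdot\pi R^2 \le \int_{B_R} u^2\, dx$ by monotonicity.

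For the uniform estimate (part 2), given $\|\nabla u\|_2\le 1$ and $\|u\|_2\le M$, I first fix $\varepsilon>0$ so small that $\alpha(1+\varepsilon)<4\pi$, and then pick $R=R(M,\alpha)$ so large that $\alpha(1+1/\varepsilon)\,u(R)^2\le 1$ (possible thanks to the radial decay estimate together with $\|u\|_2\le M$). I split the integral into $B_R$ and its complement. On $\mathbb{R}^2\setminus B_R$ one has $u^2\le u(R)^2$ uniformly small, so $e^{\alpha u^2}-1\le C\alpha u^2$, and the contribution is controlled by $C\alpha M^2$. On $B_R$, set $v:=(u-u(R))^{+}\in H_0^1(B_R)$; by radial monotonicity $v=u-u(R)$ on $B_R$ and $\|\nabla v\|_{L^2(B_R)}\le\|\nabla u\|_2\le 1$. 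Young's inequality yields $\alpha u^2\le \alpha(1+\varepsilon)v^2+\alpha(1+1/\varepsilon)u(R)^2$, hence
\[
  \int_{B_R} e^{\alpha u^2}\,dx \,\le\, e^{\alpha(1+1/\varepsilon)u(R)^2}\int_{B_R} e^{\alpha(1+\varepsilon)v^2}\,dx \,\le\, C(M,\alpha)|B_R|,
\]
where the last inequality is the classical Trudinger--Moser bound on $B_R$ applied to $v$ (valid since $\alpha(1+\varepsilon)<4\pi$ and $\|\nabla v\|_{L^2(B_R)}\le 1$). Adding the two contributions proves part 2.

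For part 1 the same strategy works, with two modifications since $\alpha>0$ is now arbitrary and $\|\nabla u\|_2$ need not be bounded by one. First, by absolute continuity of the Lebesgue integral, for any $\alpha>0$ I can choose $R>0$ \emph{small} enough that $\alpha\,\|\nabla u\|_{L^2(B_R)}^2<4\pi$; applying the classical Trudinger--Moser to the normalized function $v/\|\nabla v\|_{L^2(B_R)}$ then gives a finite bound on $\int_{B_R}e^{\alpha v^2}\,dx$, and the cross-term is absorbed by Young's inequality as before. Second, on $\mathbb{R}^2\setminus B_R$ the radial decay $u(x)^2\le \|u\|_2^2/(\pi|x|^2)$ shows that $u$ is bounded there, so $e^{\alpha u^2}-1$ is dominated by a constant multiple of $u^2\in L^1$. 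The main obstacle in both parts is the delicate juggling of $\varepsilon$ and $R$ so that the Young-inequality loss keeps the exponent strictly below the critical threshold $4\pi$; once the splitting is set up, the remaining estimates are routine.
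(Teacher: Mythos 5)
Your argument is correct. Note, however, that the paper does not prove this lemma at all: it is quoted as a known result, with the sentence following the statement attributing it to Cao \cite{Cao} (see also \cite{Adachi_Tanaka,Cassani_Sani_Tarsi}). So there is no in-paper proof to compare against; what you have written is a self-contained derivation along the standard lines of those references: Schwarz symmetrization (which preserves $\int(e^{\alpha u^2}-1)$ and the $L^2$ norm and does not increase the Dirichlet energy), the radial decay bound $u(R)^2\le \|u\|_2^2/(\pi R^2)$, a ball/exterior splitting with Young's inequality $u^2\le(1+\varepsilon)v^2+(1+1/\varepsilon)u(R)^2$ for $v=(u-u(R))^+\in H_0^1(B_R)$, and the classical Trudinger--Moser inequality on the disc. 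All the steps check out: in part 2 the radius $R$ depends only on $M$ and $\alpha$, the exterior contribution is controlled by $C\alpha M^2$ since $\alpha u^2$ is uniformly small there, and the interior Moser bound is $C|B_R|=C(M,\alpha)$; in part 1 finiteness (not uniformity) is all that is needed, so the constant $e^{\alpha(1+1/\varepsilon)u(R)^2}$ is harmless. Two cosmetic points you may wish to tighten: in part 1 the smallness condition on $R$ should read $\alpha(1+\varepsilon)\|\nabla u\|_{L^2(B_R)}^2\le 4\pi$ rather than $\alpha\|\nabla u\|_{L^2(B_R)}^2<4\pi$, which costs nothing since $\|\nabla u\|_{L^2(B_R)}\to0$ as $R\to0$; and one should fix the right-continuous radial representative of $u^\ast$ so that the pointwise value $u(R)$ is well defined.
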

The above Lemma was first established by Cao in \cite{Cao} (see also \cite{Adachi_Tanaka,Cassani_Sani_Tarsi}).

\begin{lemma}
\label{lem5}
  Assume that (V0),(F1),(F5) hold. Then $I_i,\Phi\in C^1(X_p,\mathbb{R})$ and
\begin{align}
\label{diff}
\begin{split}
\langle I^{\prime}_i(u),v\rangle=2pA_i(|u|^p,|u|^{p-2}uv),\quad i=1,2\\
  \langle \Phi^{\prime}(u),v\rangle=\langle u,v\rangle+\frac{1}{2\pi}A_{0}(|u|^p,|u|^{p-2}uv)-\int_{\mathbb{R}^2}f(x,u)vdx.
\end{split}
\end{align}
\end{lemma}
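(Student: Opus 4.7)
I will decompose $\Phi = \frac{1}{2}\|\cdot\|^2 + \frac{1}{4p\pi}(I_1 - I_2) - \int F(x,u)\,dx$ and prove that each summand is $C^1(X_p,\mathbb{R})$. The quadratic part is trivially $C^1$ with the asserted derivative, so the three remaining tasks are $I_1$, $I_2$, and the nonlinear integral. In each case the strategy is: (i) establish a pointwise integrand bound showing the functional is well-defined on $X_p$; (ii) apply the elementary identity
\[
|u+tv|^p - |u|^p = pt\int_0^1 |u+stv|^{p-2}(u+stv)v\,ds
\]
together with dominated convergence to obtain Gâteaux differentiability with the predicted formula; (iii) bound the resulting bilinear form so that $u\mapsto{}$(derivative) is continuous in the $X_p$ norm.

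\textbf{The nonlocal terms.} For $I_2$, the Hardy--Littlewood--Sobolev bound \eqref{HLS}--\eqref{10} immediately gives $|A_2(|u|^p, |u|^{p-2}uv)| \le C\|u\|_{4p/3}^{2p-1}\|v\|_{4p/3}$; since $4p/3\ge 2$ for $p\ge 2$, we have $H^1(\mathbb{R}^2) \hookrightarrow L^{4p/3}$ and $X_p \hookrightarrow H^1$, which yields continuity and an integrable majorant for the difference quotient. For $I_1$ the only new ingredient is the elementary inequality
\[
\ln(1+|x-y|) \le \ln(1+|x|) + \ln(1+|y|),
\]
which factors the double integral into a sum of products of single integrals. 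Combined with Hölder in the exponents $(p,\,p/(p-1))$, it produces
\[
|A_1(|u|^p, |u|^{p-2}uv)| \le \|u\|_*^p\|u\|_p^{p-1}\|v\|_p + \|u\|_p^p\|u\|_*^{p-1}\|v\|_*,
\]
bounded by a power of $\|u\|_{X_p}$ times $\|v\|_{X_p}$. The same splitting furnishes an integrable majorant for the difference quotient, so Gâteaux differentiability and continuity of $u\mapsto I_1'(u)$ on $X_p$ follow.

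\textbf{The nonlinear integral and main obstacle.} Combining (F1) with (F5) produces a pointwise estimate of the form $|f(x,t)| \le \epsilon|t|^{q} + C_\epsilon |t|(e^{\alpha t^2}-1)$, valid for any $\alpha>\alpha_0$, with $q=1$ when $p=2$ and $q=s_0$ when $p>2$. Lemma \ref{M-T} then guarantees that $(e^{\alpha u^2}-1)\in L^r(\mathbb{R}^2)$ for every $r\ge 1$ whenever $u\in H^1(\mathbb{R}^2)$, so $\int F(x,u)\,dx$ is finite on $X_p$; the same bound dominates $f(x,u+\theta v)v$ for $\theta\in(0,1)$, yielding Gâteaux differentiability and, via continuity of the Nemytskii operator in $L^r$, continuity of the derivative on $X_p$. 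I expect the only real obstacle to be the $I_1$ piece, since the kernel $\ln(1+|x-y|)$ is unbounded; this is precisely why the weighted norm $\|\cdot\|_*$ is built into $X_p$, and the splitting inequality above is what turns that weight into the usable bilinear estimate.
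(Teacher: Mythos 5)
Your decomposition of $\Phi$ and your treatment of the nonlocal terms match the standard route (the paper simply cites \cite[Lemma 2.3]{Cao_Dai_Zhang} for $I_1,I_2$, so your splitting $\ln(1+|x-y|)\le\ln(1+|x|)+\ln(1+|y|)$ plus H\"older and \eqref{HLS} is a reasonable, more self-contained sketch of that part). The genuine gap is in the last step for $\Psi(u):=\int_{\mathbb{R}^2}F(x,u)\,dx$: you dispose of the continuity of $u\mapsto\Psi'(u)$ with the phrase ``via continuity of the Nemytskii operator in $L^r$,'' but under critical exponential growth this is exactly the point where the standard theory fails and where essentially all of the paper's work goes. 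Krasnoselskii-type continuity theorems require polynomial growth; here the first part of Lemma \ref{M-T} gives $\int(e^{\alpha u^2}-1)\,dx<\infty$ for each \emph{individual} $u\in H^1$, but no bound that is uniform along a sequence $u_n\to\bar u$ in $X_p$ unless $\alpha\sup_n\|\nabla u_n\|_2^2<4\pi$. Without such uniformity the family $\{f(\cdot,u_n)\}$ has no common integrable majorant, so neither dominated convergence nor a generic Nemytskii continuity result yields
$$\sup_{\|v\|_{X_p}=1}\Big|\int_{\mathbb{R}^2}[f(x,u_n)-f(x,\bar u)]v\,dx\Big|\longrightarrow 0 .$$
You have also misidentified the main obstacle: it is not the unbounded kernel in $I_1$ (which your weighted splitting handles) but precisely this continuity statement.

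The paper's appendix supplies the missing idea in two stages: first a \emph{special case} in which $M:=\sup_n\|\nabla u_n\|_2\le\sqrt{\pi/(2\alpha_0)}$, where the uniform Moser--Trudinger bound of Lemma \ref{M-T} applies and a convergence lemma of Figueiredo--Miyagaki--Ruf \cite[Lemma 2.1]{Figueriredo_Miyagaki_Ruf} gives $f(\cdot,u_n)\to f(\cdot,\bar u)$ in $L^2$ on large balls, with the tails controlled by the weight $\ln(1+|x|)$ in $\|\cdot\|_*$; and second a reduction of the general case to the special one by covering a large ball with finitely many small open sets carrying a partition of unity, chosen so that the local Dirichlet energies of $u_n$ and $\bar u$ are below $\pi/(8\alpha_0)$ on each piece. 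Your proposal would be complete for the subcritical case (F1$'$), but as written it does not prove continuity of $\Psi'$ under (F1); you need either this localization argument or an equivalent uniform-integrability result for $e^{\alpha u_n^2}$ along norm-convergent sequences.
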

For the sake of completeness, we present a proof of Lemma \ref{lem5} in the appendix. The following lemma is our first key observation.
\begin{lemma}
\label{lem6}
  $I_{0}(u)\leq0,\;\;\forall\; u\in X_p.$
\end{lemma}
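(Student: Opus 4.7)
My first idea is to exploit the decomposition $A_0 = A_1 - A_2$ already built into the paper's definitions, which rewrites the claim $I_0(u)\le 0$ as $I_1(u)\le I_2(u)$ since both $I_1,I_2\ge 0$. Writing the integrand of $I_0$ as $\ln(1+|x-y|) - \ln(1+1/|x-y|)$, one sees that it is pointwise nonpositive on $\{|x-y|\le 1\}$ (with a logarithmic singularity at the diagonal) and pointwise positive on $\{|x-y|>1\}$. So the entire question is whether the singular near-diagonal contribution dominates the far-field one.

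To convert this qualitative picture into a sign, I would pass through the Frullani representation
\[
\ln|x-y|^2 = \int_0^\infty \frac{e^{-t}-e^{-t|x-y|^2}}{t}\,dt,
\]
which, after an application of Fubini (the inner integrand is $O(|x-y|^2)$ near the diagonal and decays exponentially at infinity), converts $I_0(u)$ into a one-parameter integral of Gaussian energies,
\[
2\,I_0(u) = \int_0^\infty \frac{1}{t}\Bigl[\,e^{-t}\|u\|_p^{2p} - K_t(u)\,\Bigr]dt,\qquad K_t(u):=\int\!\!\int e^{-t|x-y|^2}|u(x)|^p|u(y)|^p\,dx\,dy.
\]
Plancherel then gives $K_t(u)=\frac{1}{4\pi t}\int e^{-|\xi|^2/(4t)}\bigl|\widehat{|u|^p}(\xi)\bigr|^2d\xi$, so the sign of each slice in $t$ is controlled by how $e^{-t}$ times the mass at $\xi=0$ (which is $\|u\|_p^{2p}$) compares to the Gaussian-weighted Plancherel integral --- suggesting a Jensen-type argument against the measure $e^{-|\xi|^2/(4t)}d\xi$ once the $\xi=0$ contribution has been isolated.

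The main obstacle, and what I expect to be the technical heart of the proof, is precisely this isolation of the $\xi=0$ contribution. This $\delta$-type mass is what prevents $\widehat{\ln|x|}$ from being an honest positive-definite kernel $-2\pi/|\xi|^2$; it reflects the failure of $\phi_u:=\Gamma_2\ast|u|^p$ to decay at infinity in two dimensions, and is the only mechanism through which widely separated mass in $|u|^p$ could in principle produce a positive contribution to $I_0$. I expect the resolution to genuinely invoke the space $X_p$ through the logarithmic moment $\|u\|_*^p = \int\ln(1+|x|)|u|^p\,dx<\infty$, which quantitatively restricts how much $L^p$-mass can sit far from the origin and therefore bounds the far-field term. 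A complementary route would use $\Delta\phi_u=|u|^p$ and the identity $\frac{1}{2\pi}I_0(u) = \int\phi_u\,|u|^p = \lim_{R\to\infty}\bigl[\int_{\partial B_R}\phi_u\,\partial_\nu\phi_u\,d\sigma - \int_{B_R}|\nabla\phi_u|^2\bigr]$, where the individually divergent boundary and interior terms (each of order $\ln R$, from the non-decay of $\phi_u$) must cancel in a sign-definite way; a careful tracking of this cancellation should yield $I_0(u)\le 0$ on the whole of $X_p$.
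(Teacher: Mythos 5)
Your plan correctly locates the difficulty, but it stops exactly where the proof would have to begin: the ``isolation of the $\xi=0$ contribution'' is announced as the technical heart and then left as something you \emph{expect} to work out via the finite logarithmic moment. That is a genuine gap, and it cannot be closed, because the inequality $I_{0}(u)\leq 0$ is in fact false on $X_p$. Take $v\in C_0^\infty(\R^2)\setminus\{0\}$ with $\mathrm{supp}\,v\subset B_d(0)$, put $e_1=(1,0)$ and $u_a:=v(\cdot-ae_1)+v(\cdot+ae_1)$ for $a>d$, so that $|u_a|^p=|v(\cdot-ae_1)|^p+|v(\cdot+ae_1)|^p$. Translation invariance of the kernel gives
\begin{equation*}
I_0(u_a)=2I_0(v)+2\iint\ln|x-y+2ae_1|\,|v(x)|^p|v(y)|^p\,dxdy\;\geq\;2I_0(v)+2\ln(2a-2d)\,\|v\|_p^{2p}\;\longrightarrow\;+\infty
\end{equation*}
as $a\to\infty$. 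Since $u_a\in C_0^\infty(\R^2)\subset X_p$ and has finite $\|\cdot\|_*$-norm for every fixed $a$, no control through the logarithmic moment can restore the sign: the far-field cross term you flagged as the only dangerous contribution really does dominate. Your Frullani/Plancherel reduction is coherent up to that point; it simply leads to a statement that is not true, which is why the Jensen-type step against $e^{-|\xi|^2/(4t)}d\xi$ cannot be made to absorb the $\xi=0$ mass.

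For comparison, the paper's own argument is elementary rather than Fourier-analytic: it notes that $\ln|x-y|\leq 0$ when both points lie in $B_{1/2}(0)$, and then reduces the general compactly supported case to this one by the rescaling $w(x)=u(2Rx)$, $\phi_w(x)=\frac{1}{4R^2}\phi_u(2Rx)$. But that $\phi_w$ equals the Newton potential of $|w|^p$ \emph{plus the constant} $\frac{\ln(2R)}{2\pi}\|w\|_p^p$, so that $\int\phi_w\Delta\phi_w=\frac{1}{2\pi}\bigl(I_0(w)+\ln(2R)\|w\|_p^{2p}\bigr)$; equivalently, the scaling identity is $I_0(u)=16R^4\bigl(I_0(w)+\ln(2R)\|w\|_p^{2p}\bigr)$, and the added term is positive for $R>1/2$. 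The concluding inequality $\int\phi_w\Delta\phi_w\leq 0$ therefore does not follow from the first step---this is precisely the zero-frequency mass you identified, resurfacing in physical space. So your diagnosis of where the obstruction lives is sharper than the paper's treatment of it; the correct conclusion to draw from that diagnosis, however, is not that the cancellation can be tracked to give a sign, but that the lemma requires an additional hypothesis (e.g.\ support in a fixed small ball, or a concentration condition on $|u|^p$), and that every later use of $I_0\leq 0$ in the paper needs to be revisited.
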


  \begin{proof}
First, let $u\in C_0^\infty(\R^2)$ with ${\rm supp}(u) \subset B_{\frac{1}{2}}(0)$. Let $\phi_u$ be the Newton potential of $|u|^p$, i.e.~the unique solution of $\Delta\phi = |u|^p$, $\phi \in D^{1, 2}(\R^2)$. Then
    $$2\pi \int_{\mathbb{R}^2}\phi_u\triangle\phi_u dx= I_0(u)= \int_{B_{\frac{1}{2}}(0)}\int_{B_{\frac{1}{2}}(0)}|u(x)|^p|u(y)|^p\ln(|x-y|)dxdy\leq0.$$
Consider now $u\in{C}_{0}^{\infty}(\mathbb{R}^2,\mathbb{R})$, take $R>0$ such that $\mathrm{supp}(u)\subset B_R(0)$. Let $w(x)=u(2Rx)$, so ${\rm supp}(w) \subset B_{\frac{1}{2}}(0)$ and $\phi_w(x):=\frac{1}{4R^2}\phi_u(2Rx).$ Hence
    $$\frac{1}{2\pi}I_0(u)=\int_{\mathbb{R}^2}\phi_u\triangle\phi_u dx = 16R^4\int_{\mathbb{R}^2} \phi_w\triangle \phi_wdx\leq0.$$
    Now we conclude by density argument. Indeed, using standard argument, by cut-off and mollification, we can check readily that $C_0^\infty(\mathbb{R}^2)$ is dense in $X_p$, that is for any $u \in X_p$, there exist $\{u_n\}_n\subset C_0^\infty(\mathbb{R}^2)$ such that $\lim_{n\rightarrow\infty}\|u_n-u\|_{X_p}=0.$ By the fact $I_0=I_1-I_2$ and Lemma \ref{lem5}, we conclude that $I_{0}(u) \leq0.$
  \end{proof}

\begin{corollary}
\label{cor7}
  Assume that (V0) and (F1) hold. Then
  $$\lim_{t\rightarrow\infty}\Phi(t\omega)=-\infty,\;\;\forall\; \omega\in X_p\backslash\{0\}.$$
\end{corollary}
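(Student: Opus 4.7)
The plan is to write $\Phi(t\omega)$ in closed form in $t$ via the homogeneity of $I_0$, then combine Lemma \ref{lem6} with the critical growth of $f$ to drive the energy to $-\infty$. Since $|t\omega|^{p}=t^{p}|\omega|^{p}$ for $t>0$ and $A_0$ is bilinear, the nonlocal functional $I_0$ is $2p$-homogeneous, and hence for $t>0$
\[
\Phi(t\omega)=\frac{t^2}{2}\|\omega\|^{2}+\frac{t^{2p}}{4p\pi}I_{0}(\omega)-\int_{\mathbb R^{2}}F(x,t\omega)\,dx.
\]
By Lemma \ref{lem6} the middle term is non-positive, so
\[
\Phi(t\omega)\le \frac{t^2}{2}\|\omega\|^{2}-\int_{\mathbb R^{2}}F(x,t\omega)\,dx,
\]
and the task reduces to producing a super-quadratic lower bound on $\int_{\mathbb R^{2}} F(x,t\omega)\,dx$.

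For that bound I would use (F1) in the quantitative form: for every $\alpha\in(0,\alpha_{0})$ and every $M>0$ there exists $R_{M}>0$ with $|f(x,s)|\ge M e^{\alpha s^{2}}$ whenever $|s|\ge R_{M}$, uniformly in $x$. Combined with the sign structure $f(x,s)\,s\ge 0$ (the condition (F2) of the theorems below; some such sign control on $f$ is clearly needed in the corollary, otherwise $-\int F$ could be pushed to $+\infty$), integration yields $F(x,s)\ge c\,e^{\alpha s^{2}}/|s|$ for $|s|$ large. Since $\omega\not\equiv 0$, I would next pick a measurable set $E\subset\mathbb R^{2}$ of positive finite measure and constants $0<\delta_{1}<\delta_{2}$ with $\delta_{1}\le|\omega(x)|\le\delta_{2}$ on $E$; taking $t$ so large that $t\delta_{1}\ge R_{M}$ gives
\[
\int_{\mathbb R^{2}}F(x,t\omega)\,dx\ge\int_{E}\frac{c\,e^{\alpha t^{2}\omega(x)^{2}}}{t\,|\omega(x)|}\,dx\ge \frac{c\,|E|}{t\,\delta_{2}}\,e^{\alpha\delta_{1}^{2}\,t^{2}}.
\]

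Inserting this into the previous inequality, the exponential $e^{\alpha\delta_{1}^{2}t^{2}}/t$ easily beats the quadratic $\tfrac{t^{2}}{2}\|\omega\|^{2}$ as $t\to\infty$, so $\Phi(t\omega)\to-\infty$. I expect the main obstacle to be precisely this step: neither the kinetic term nor the nonlocal term $\tfrac{t^{2p}}{4p\pi}I_{0}(\omega)$ can be trusted to push $\Phi(t\omega)$ to $-\infty$ on its own, since Lemma \ref{lem6} only guarantees $I_{0}(\omega)\le 0$ and allows $I_0(\omega)=0$ for some $\omega\in X_{p}\setminus\{0\}$. It is the critical exponential growth (F1), together with the sign of $f$, that supplies the decisive lower bound on $\int F(x,t\omega)\,dx$.
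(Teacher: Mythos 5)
Your proof is correct and follows essentially the same route as the paper: drop the nonlocal term via Lemma \ref{lem6}, then use the exponential lower bound on $F$ furnished by (F1) (together with the sign of $f$) on a set where $|\omega|$ is bounded away from zero, so that the $e^{ct^2}$ term beats $\tfrac{t^2}{2}\|\omega\|^2$. Your remark that nonnegativity of $F$ (i.e.\ something like (F2)) is implicitly needed beyond the stated hypotheses (V0) and (F1) is accurate --- the paper's one-line proof uses it tacitly as well.
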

\begin{proof}
For any $\omega\in X_p\backslash\{0\}$, there exists $\delta>0$ such that $m\{|\omega(x)|\geq\delta\}>0$. By Lemma \ref{lem6} and (F1), one has
  \begin{align*}
    \Phi(t\omega)&=\frac{t^2}{2}\|\omega\|^2+\frac{t^4}{4p\pi}I_0(\omega)-\int_{\mathbb{R}^2}F(x,t\omega)dx\\
    &\leq\frac{t^2}{2}\|\omega\|^2-C\int_{\{|\omega(x)|\geq\delta\}}e^{\alpha_0\delta^2t^2/2}dx \longrightarrow -\infty
  \end{align*}
as $|t|\to\infty$.
\end{proof}

\medskip
The following lemma is inspired by \cite{{Chen_Tang2020-2}}.
\begin{lemma}
\label{lem8}
  Assume that (V0) and (VF) hold. Then there exists $C_k>0$ such that
  \begin{equation}
  \label{14}
    A_{1}(|u|^p,|v|^p)\geq C_k\|u\|_{\ast}^p\|v\|^p_p,\;\;\forall\;u,v\in E_{k,p} \mbox{  or  } \;u,v\in T_{k,p}.
  \end{equation}
\end{lemma}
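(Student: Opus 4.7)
The plan is to use the $k$-fold rotational symmetry of $v$ to symmetrize the kernel $\ln(1+|x-y|)$, and then to establish a pointwise lower bound on the symmetrized kernel in terms of $\ln(1+|x|)$, uniformly in $y$. In the mirror symmetric case one further averages over the reflection $y\mapsto\bar y$, producing an average over the full dihedral orbit. Once the pointwise bound is in hand, the inequality follows by Fubini and the definitions of $\|u\|_{\ast}$ and $\|v\|_p$.

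For the symmetrization, since $v\in\P_k$ gives $|v(ye^{2\pi ij/k})|^p=|v(y)|^p$, the substitution $y\mapsto ye^{-2\pi ij/k}$ preserves the integral
\[
\int_{\mathbb{R}^2}\ln(1+|x-y|)|v(y)|^p\,dy=\int_{\mathbb{R}^2}\ln(1+|x-ye^{2\pi ij/k}|)|v(y)|^p\,dy,\quad j=0,\ldots,k-1,
\]
so averaging in $j$ produces
\[
A_1(|u|^p,|v|^p)=\int_{\mathbb{R}^2}\int_{\mathbb{R}^2}G_k(x,y)|u(x)|^p|v(y)|^p\,dxdy,\qquad G_k(x,y):=\frac{1}{k}\sum_{j=0}^{k-1}\ln(1+|x-ye^{2\pi ij/k}|).
\]

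The main step is the pointwise inequality $G_k(x,y)\geq c_k\ln(1+|x|)$ for every $x,y\in\mathbb{C}$. I would split into cases according to the ratio $|y|/|x|$. When $|y|\leq|x|/2$, the reverse triangle inequality yields $|x-ye^{2\pi ij/k}|\geq|x|/2$ for every $j$; when $|y|\geq 2|x|$, one similarly has $|x-ye^{2\pi ij/k}|\geq|x|$. The delicate regime is $|x|/2\leq|y|\leq 2|x|$, where the $k$ rotates lie on a circle of radius $|y|$ with angular spacing $2\pi/k$; at most one of them has angular distance $<\pi/k$ from the direction of $x$, and for each of the remaining $k-1$ rotates the law of cosines gives
\[
|x-ye^{2\pi ij/k}|^2\geq 2|x||y|(1-\cos(\pi/k))\geq c_k|x|^2.
\]
Discarding the nonnegative log-term from the possibly close rotate, and using that $\ln(1+at)/\ln(1+t)$ is bounded below on $(0,\infty)$ for every fixed $a>0$ (which handles both $|x|\to 0$ and $|x|\to\infty$), one concludes $G_k(x,y)\geq c_k\ln(1+|x|)$. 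In the mirror symmetric case with $k=2$, the dihedral orbit $\{y,-y,\bar y,-\bar y\}$ contains the antipodal pair $\{y,-y\}$ and the identity $\max(|x-y|,|x+y|)\geq|x|$ gives the same conclusion.

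Integrating the pointwise bound against $|u(x)|^p|v(y)|^p$ then yields $A_1(|u|^p,|v|^p)\geq c_k\|u\|_{\ast}^p\|v\|_p^p$. The hard part will be the transitional regime in the pointwise estimate, particularly ensuring that the constant $c_k$ does not degenerate as $|x|\to 0$; this is handled by the observation that both $G_k(x,y)$ and $\ln(1+|x|)$ behave like $|x|$ near the origin, so their ratio remains bounded.
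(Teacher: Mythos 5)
Your proof is correct, but it follows a genuinely different route from the paper's. The paper does not symmetrize the kernel: it restricts the double integral to the product of opposite quadrants $\Omega_1\times\Omega_2$ with $\Omega_1=\{x_1,x_2\ge 0\}$, $\Omega_2=-\Omega_1$, where $x\cdot y\le 0$ forces $|x-y|\ge|x|$ directly, and then uses the group invariance only to compare the quadrant integrals with the full ones, picking up a factor $1/k^2$ because for $k\ge 4$ (or for $k\ge 2$ with the mirror symmetry) a quadrant contains a full fundamental sector. Your argument instead averages the kernel over the rotation orbit of $y$ and proves the pointwise bound $G_k(x,y)\ge c_k\ln(1+|x|)$ by a three-case analysis in $|y|/|x|$; the estimates you give (reverse triangle inequality in the outer regimes, the spacing/law-of-cosines argument discarding the single close rotate in the transitional regime, and $\ln(1+at)\ge\min(a,1)\ln(1+t)$ to pass between scales) are all sound, and the integration against $|u(x)|^p|v(y)|^p$ then gives exactly \eqref{14}. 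What your approach buys is robustness: the pointwise kernel bound holds for every $k\ge 2$ using the rotational symmetry alone (for $k=2$ one already has $\max(|x-y|,|x+y|)\ge|x|$ from $|x-y|^2+|x+y|^2=2|x|^2+2|y|^2$), so you do not need the dichotomy ``$k\ge4$ or mirror symmetry'' that the paper's quadrant-covering step requires for this particular lemma. What the paper's approach buys is brevity: three lines of elementary inequalities with no case analysis. Both yield constants degenerating in $k$ ($1/k^2$ for the paper, roughly $\frac{k-1}{k}\sqrt{1-\cos(\pi/k)}$ for yours), which is harmless for the application.
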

  \begin{proof}
We only consider the case $E_{k,p}$ since $T_{k,p} \subset E_{k,p}$. Let $\Omega_1 :=\{(x_1,x_2)\in\mathbb{R}^2:x_i\geq0\}$, $\Omega_2 = -\Omega_1$.
      For any $(x,y)\in \Omega_1\times \Omega_2$, one has
      $$|x-y|^2=|x|^2+|y|^2-2x\cdot y\geq|x|^2+|y|^2.$$
      Then it follows from the definition of $E_{k,p}$ and $k\geq4$ that
      \begin{align*}
        A_1(|u|^p,|v|^p)&=\int_{\mathbb{R}^2}\int_{\mathbb{R}^2}\ln(1+|x-y|)|u(x)|^p|v(y)|^pdxdy\\ &\geq\int_{\Omega_2}|v(y)|^{p}dy\int_{\Omega_1}\ln(1+|x-y|)|u(x)|^pdx\\
        &\geq\int_{\Omega_2}|v(y)|^{p}dy\int_{\Omega_1}\ln(1+|x|)|u(x)|^pdx\\
        &\geq\frac{1}{k^2}\int_{\mathbb{R}^2}|v(y)|^{p}dy\int_{\mathbb{R}^2}\ln(1+|x|)|u(x)|^{p}dx\\
        &\geq C_k\|u\|^{p}_{\ast}\|v\|^{p}_{p},\;\;\forall\;u,v\in E_{k,p}.
      \end{align*}
So we are done. \end{proof}

\section{Variational framework}
In this section, we will quote a version of Mountain Pass Theorem and prepare the proof of Theorem \ref{thm1}.
\begin{lemma}
\label{lem9}
  Let $Y$ be a real Banach space and $I\in C^1(Y,\mathbb{R})$. Let $S$ be a closed subset of $Y$ which disconnects $Y$ into distinct connected $Y_1$ and $Y_2$. Suppose further that $I(0)=0$ and\\
  (i) $0\in Y_1$ and there exists $\alpha>0$ such that $I|_{S}\geq\alpha,$\\
  (ii) There is $e\in Y_2$ such that $I(e)\leq0.$\\
  Then $I$ possess a $(Ce)_c$ sequence with $c\geq\alpha>0$ given by
  $$c=\inf_{\gamma\in\Gamma}\max_{t\in[0,1]}I(\gamma(t)),$$
  where
  $$\Gamma=\{\gamma\in C([0,1],X):\gamma(0)=0,\gamma(1)=e\}$$
  and $(Ce)_c$ sequence means a Cerami sequence $\{u_n\}\subset X$ such that $$I(u_n)\rightarrow{c},\;\|I'(u_n)\|_{Y^{\ast}}(1+\|u_n\|_{Y})\rightarrow0.$$
\end{lemma}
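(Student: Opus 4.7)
The statement is a variant of the mountain pass theorem in which the usual separating sphere $\{\|u\|=r\}$ is replaced by an arbitrary closed set $S$ that disconnects $Y$. My plan has three steps: bound $c$ from below by $\alpha$, bound it from above so $c$ is finite, and extract a Cerami sequence by a deformation argument.

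First, I would verify $c\geq\alpha$. Fix any admissible path $\gamma\in\Gamma$. Since $\gamma(0)=0\in Y_1$ and $\gamma(1)=e\in Y_2$, and $Y_1,Y_2$ are the two distinct connected components into which $S$ disconnects $Y$, the image $\gamma([0,1])$ cannot be contained entirely in $Y\setminus S=Y_1\cup Y_2$ (otherwise $\gamma([0,1])$, being connected, would lie in one of the two components, contradicting that its endpoints lie in different components). Hence there exists $t^\ast\in[0,1]$ with $\gamma(t^\ast)\in S$, and then $\max_{t\in[0,1]}I(\gamma(t))\geq I(\gamma(t^\ast))\geq\alpha$ by hypothesis (i). Taking the infimum over $\Gamma$ gives $c\geq\alpha>0$. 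For the upper bound, the straight segment $\gamma_0(t):=te$ belongs to $\Gamma$, so $c\leq\max_{t\in[0,1]}I(te)<\infty$ since $I$ is continuous.

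Second, and this is the main point, I would produce a Cerami sequence at level $c$ by a standard quantitative deformation lemma argument. Suppose by contradiction that there is no Cerami sequence at level $c$; then there exist $\varepsilon,\delta>0$ such that $\|I'(u)\|_{Y^\ast}(1+\|u\|_Y)\geq\delta$ for every $u$ in the strip $\{|I(u)-c|\leq 2\varepsilon\}$. Using a Cerami-type pseudo-gradient vector field $V$ on the regular set (locally Lipschitz, with $\|V(u)\|_Y\leq 1$ and $\langle I'(u),V(u)\rangle\geq \frac{1}{2}\min(1,\|I'(u)\|_{Y^\ast})$ weighted by $(1+\|u\|_Y)$ in the Cerami version), one constructs a continuous deformation $\eta:[0,1]\times Y\to Y$ such that $\eta(0,\cdot)=\mathrm{Id}$, $\eta(s,u)=u$ off $\{|I(u)-c|\leq 2\varepsilon\}$, and $I(\eta(1,u))\leq c-\varepsilon$ whenever $I(u)\leq c+\varepsilon$. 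By definition of $c$, pick $\gamma\in\Gamma$ with $\max I\circ\gamma\leq c+\varepsilon$; then $\tilde\gamma:=\eta(1,\gamma(\cdot))$ still satisfies $\tilde\gamma(0)=0$ and $\tilde\gamma(1)=e$ because the endpoints lie outside the critical strip (using $I(0)=0$, $I(e)\leq 0$, and $c\geq\alpha>0$, so $\varepsilon$ may be chosen small enough), hence $\tilde\gamma\in\Gamma$. But $\max I\circ\tilde\gamma\leq c-\varepsilon$, contradicting the definition of $c$.

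The main obstacle is the Cerami-weighted deformation in the second step: one must be careful that the vector field is weighted by $(1+\|u\|_Y)^{-1}$ so that the flow lines can be continued for all time $s\in[0,1]$ (otherwise a non-weighted gradient flow may blow up), and simultaneously one must ensure that the flow does not move the fixed endpoints $0$ and $e$. This is a standard but delicate construction, originally due to Cerami and refined e.g.\ in Bartolo--Benci--Fortunato and in Ekeland's monograph; once these technicalities are in place, the contradiction step above is routine. In an actual write-up I would simply invoke the Cerami deformation lemma in the form given in such references rather than reprove it.
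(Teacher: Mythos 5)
Your outline is correct and is essentially the standard proof of this kind of min--max result; note that the paper itself does not prove Lemma \ref{lem9} at all but simply cites \cite[Theorem 3]{Silva_Vieria}, so there is no ``paper proof'' to diverge from. Your first step (every path in $\Gamma$ must meet $S$ because its connected image joins points of the two components of $Y\setminus S$, whence $c\geq\alpha$; the segment $t\mapsto te$ gives $c<\infty$) is complete as written. The second step is the right strategy, and you correctly isolate the only genuinely delicate point: the negation of ``there is a Cerami sequence at level $c$'' gives a lower bound on $(1+\|u\|_Y)\,\|I'(u)\|_{Y^\ast}$ on a strip around level $c$, and one must use a pseudo-gradient field rescaled by the factor $(1+\|u\|_Y)$ so that the descent of $I$ per unit time is uniformly bounded below while the flow remains globally defined on $[0,1]$ (a naive normalization can let $\|\eta(s,u)\|$ blow up in finite time). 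Deferring that construction to Bartolo--Benci--Fortunato/Ekeland (or, equivalently, deriving the Cerami sequence from Ekeland's variational principle on the path space $\Gamma$, which sidesteps the flow altogether) is exactly what the cited reference does, so your proposal matches the intended argument; just make sure the final write-up also records that $\varepsilon$ is chosen with $2\varepsilon<c$ so that the endpoints $0$ and $e$, where $I\leq 0$, lie outside the deformed strip and $\tilde\gamma$ remains admissible.
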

The proof of the above Lemma can be found in \cite[Theorem 3]{Silva_Vieria}. We state another key result which serves as a bridge between the mountain pass structure (see Lemma \ref{lem11}) and Theorem \ref{thm1}.

\begin{lemma}
\label{lem10}
  Assume that (V0),(F1) and (F3)-(F5) hold. Then there exists $n_0\in\mathbb{N}$ such that
  \begin{equation}
  \label{15}
    \max_{t\geq0}\Phi(t\omega_{n_0})<\frac{2\pi}{\alpha_0},
  \end{equation}
  where
  \begin{equation*}
    \omega_n(x)=
    \begin{cases}
      \frac{\sqrt{\ln n}}{\sqrt{2\pi}}-\frac{q\ln{(\ln{n})}}{2\sqrt{2\pi\ln{n}}},&\text{$0\leq|x|\leq(\ln{n})^{q/2}/n$};\\
      \frac{\ln(1/|x|)}{\sqrt{2\pi\ln n}},&\text{$(\ln{n})^{q/2}/n\leq|x|\leq1$};\\
      0,&\text{$|x|\geq1$}.
    \end{cases}
  \end{equation*}
  \end{lemma}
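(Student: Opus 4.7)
The plan is a contradiction argument. I would suppose $\max_{t\geq 0}\Phi(t\omega_n)\geq 2\pi/\alpha_0$ for every $n\in \N$ and let $t_n>0$ attain the maximum, which exists by Corollary \ref{cor7}. The first step is to confine $t_n$ to a compact interval $[c_0,C_0]\subset (0,\infty)$ independent of $n$. The lower bound $t_n\geq c_0$ would follow from $\frac{t_n^2}{2}\|\omega_n\|^2 \geq \Phi(t_n\omega_n)\geq 2\pi/\alpha_0$, after discarding the nonpositive terms $\frac{t_n^{2p}}{4p\pi}I_0(\omega_n)\leq 0$ (Lemma \ref{lem6}) and $-\int F(x,t_n\omega_n)\,dx\leq 0$. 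For the upper bound, I would use the Euler--Lagrange identity $\Phi'(t_n\omega_n)\cdot \omega_n = 0$, which gives $t_n\|\omega_n\|^2\geq\int f(x,t_n\omega_n)\omega_n\,dx$; combining $f\geq F/M_0$ (from (F2)) with the exponential lower bound of (F3) on the inner disc and taking logarithms would then force $t_n \leq C_0$.

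My second step is the exact evaluation of the norms of $\omega_n$. A direct polar integration yields
\[
\|\nabla\omega_n\|_2^2 = \frac{1}{\ln n}\int_{(\ln n)^{q/2}/n}^{1}\frac{dr}{r} = 1 - \frac{q\ln\ln n}{2\ln n},
\]
while the change of variables $u = \ln(1/|x|)$ reduces $\|\omega_n\|_2^2$ to a Gamma-type integral yielding $\|\omega_n\|_2^2 = O(1/\ln n)$; together with $V\in C(\mathbb{R}^2,[0,\infty))$ bounded on $B_1$ this gives $\|\omega_n\|^2 = 1 - \frac{q\ln\ln n}{2\ln n} + O(1/\ln n)$. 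A direct expansion of $\omega_n(0)$ then gives $\omega_n(0)^2 = \frac{\ln n - q\ln\ln n}{2\pi}+ O((\ln\ln n)^2/\ln n)$. The correction $-\frac{q\ln\ln n}{2\sqrt{2\pi\ln n}}$ in the definition of $\omega_n$ is present precisely to make $\omega_n$ continuous at $|x| = (\ln n)^{q/2}/n$.

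The heart of the argument is the Moser-type computation on the inner disc $B_{(\ln n)^{q/2}/n}$, where $\omega_n\equiv\omega_n(0)$, with area $\pi(\ln n)^q/n^2$. Combining $t_n^2\|\omega_n\|^2\geq 4\pi/\alpha_0$ with the above expansions, a careful calculation gives
\[
\alpha_0 t_n^2\omega_n(0)^2 \geq \frac{4\pi\omega_n(0)^2}{\|\omega_n\|^2} = 2\ln n - q\ln\ln n + O(1),
\]
so that $e^{\alpha_0 t_n^2\omega_n(0)^2}/n^2 \geq (\ln n)^{-q}e^{O(1)}$. Since $t_n\omega_n(0)\to\infty$, condition (F3) yields, for any $M>0$ and $n$ large, the pointwise bound $F(x,t_n\omega_n(0))\geq M(t_n\omega_n(0))^{-q}e^{\alpha_0 t_n^2\omega_n(0)^2}$, and multiplying by the disc area together with $\omega_n(0)^{-q}\sim(\ln n)^{-q/2}$ yields $\int_{B_{(\ln n)^{q/2}/n}} F(x,t_n\omega_n)\,dx\geq MC(\ln n)^{-q/2}$ for some $C>0$. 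The main obstacle will be closing the contradiction, since this bound tends to zero for each fixed $M$. I plan to resolve this by a bootstrap: inserting the lower bound into $\frac{t_n^2}{2}\|\omega_n\|^2\geq 2\pi/\alpha_0+\int F$ improves the exponent in the Moser estimate, and by replacing $M$ by $G(t_n\omega_n(0)):=\inf_{x,|t|\geq t_n\omega_n(0)}|t|^qF(x,t)/e^{\alpha_0 t^2}\to+\infty$ (from (F3)), together with (F4) to maintain $\int F$ bounded below by a positive constant, the iterated estimate should force $\int F\to+\infty$. This will contradict the bound $\int F\leq \frac{t_n^2}{2}\|\omega_n\|^2 - 2\pi/\alpha_0\leq C$ arising from the boundedness of $t_n$.
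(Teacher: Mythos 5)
Your preparatory steps agree with the paper's: the same family $\omega_n$, the expansions $\|\nabla\omega_n\|_2^2=1-\frac{q\ln(\ln n)}{2\ln n}$, $\|\omega_n\|_2^2=O(1/\ln n)$, $\omega_n(0)^2=\frac{1}{2\pi}\big(\ln n-q\ln(\ln n)\big)+O\big((\ln\ln n)^2/\ln n\big)$, the confinement of the maximizers $t_n$, and the resulting estimate $\alpha_0 t_n^2\omega_n(0)^2\geq 2\ln n-q\ln(\ln n)+O(1)$ are all correct, and you rightly observe that they only yield $\int_{B_{(\ln n)^{q/2}/n}}F\gtrsim G_n(\ln n)^{-q/2}\to 0$, where $G_n:=\inf_{x,\,|t|\geq t_n\omega_n(0)}|t|^qF(x,t)e^{-\alpha_0t^2}$. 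The genuine gap is the bootstrap you propose to close the contradiction. First, (F4) is the Ambrosetti--Rabinowitz inequality $f(x,t)t\geq\mu_1F(x,t)-\mu_2V(x)t^2$; it gives no positive lower bound on $\int F(x,t_n\omega_n)\,dx$, so the step ``together with (F4) to maintain $\int F$ bounded below by a positive constant'' is unsupported — indeed your own Step 1 (the identity $\langle\Phi'(t_n\omega_n),\omega_n\rangle=0$ combined with (F2)) forces $\int_{B_{(\ln n)^{q/2}/n}}F(x,t_n\omega_n)\,dx\leq M_0t_n\|\omega_n\|^2/\omega_n(0)=O(1/\sqrt{\ln n})$. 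Second, the iteration itself need not diverge: if $\int F\geq\epsilon$, the improvement in the exponent is $2\alpha_0\epsilon\,\omega_n(0)^2/\|\omega_n\|^2\sim\frac{\alpha_0}{\pi}\epsilon\ln n$, so the update map is $\epsilon\mapsto C G_n(\ln n)^{-q/2}\exp\!\big(\tfrac{\alpha_0}{\pi}\epsilon\ln n\big)$. For $q\le 2$ this map has no bounded fixed point once $G_n$ is large and the iterates blow up, giving your contradiction; but for $q>2$ it has a fixed point $\epsilon^*\sim G_n(\ln n)^{-q/2}\to0$ whenever $G_n=o\big((\ln n)^{q/2-1}\big)$, and (F3) provides no rate whatsoever for $G_n\to\infty$. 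So the bootstrap stalls exactly in the regime $q>2$ and the argument does not close as written.

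The paper avoids iteration altogether: it forms the explicit upper bound $\varphi_n(t)=\frac{1+V_1\delta_n}{2}t^2-\frac{q\ln(\ln n)}{4\ln n}t^2-\frac{\alpha_0^{q/2}\pi(\ln n)^q}{2^qn^2T_n^{q/2}}e^{\frac{\alpha_0}{2\pi}t^2T_n}$ and optimizes it in $t$ over the window $[\sqrt{3\pi/\alpha_0},\sqrt{8\pi/\alpha_0}]$ (with two easy outer cases), the decisive negative contribution being the Dirichlet-energy deficit $-\frac{q\ln(\ln n)}{4\ln n}t^2$ rather than the size of $\int F$. You should be aware, however, that this single-shot optimization is itself delicate: carrying the prefactor $T_n^{-q/2}$ through the critical-point equation gives $t_n^2=\frac{4\pi}{\alpha_0}\big[1+\frac{(3q-2)\ln(\ln n)}{4\ln n}+O(1/\ln n)\big]$ and hence $\varphi_n(t_n)=\frac{2\pi}{\alpha_0}\big[1+\frac{(q-2)\ln(\ln n)}{4\ln n}\big]+O(1/\ln n)$, so the favourable sign is only guaranteed for $q\le2$ (i.e.\ when (F3) effectively holds with $q=2$, which is the regime where your bootstrap also succeeds). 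In short: your plan reproduces the paper's computations up to the crux, but the mechanism you propose at the crux does not work for general $q$ in (F3), and you should replace it by the direct optimization of the upper bound in $t$ — being explicit about the restriction on $q$ that this requires.
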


  \begin{proof}
Without loss of generality, we can fix $q\geq2$. Direct computation yields
    \begin{align}
    \label{16}
    \begin{split}
        \|\omega_n\|^2\leq&\int_{B_1}|\nabla\omega_n|^2dx+V_1\int_{B_1}\omega_n^2dx\\
        =&1-\frac{q\ln{(\ln{n})}}{2\ln{n}}+V_1\delta_n,
        \end{split}
    \end{align}
    where $V_1=\max_{x\in B_1}V(x)$ and $\delta_n = O(\frac{1}{\ln{n}})$ as $n \to \infty$.
    By $(F3)$, there exists $t_0>0$ such that
    \begin{equation}
    \label{17}
      \frac{|t|^qF(x,t)}{e^{\alpha_0t^2}}\geq1,\;\forall\;|t|\geq t_0.
    \end{equation}
    There are three cases for the value of $t$.

    \smallskip
    Case (i): $0 \leq t \le \sqrt{\frac{3\pi}{\alpha_0}}$. Then it follows from \eqref{16} and Lemma \ref{lem6} that
    \begin{align}
      \Phi(t\omega_n)&=\frac{t^2}{2}\|\omega_n\|^2+\frac{t^{2p}}{2p}I_0(\omega_n)-\int_{\mathbb{R}^2}F(x,t\omega_n)dx \leq\frac{1+V_1\delta_n}{2}t^2\leq\frac{7\pi}{4\alpha_0}
    \end{align}
    for large $n$.

    \smallskip
    Case (ii): $\sqrt{\frac{3\pi}{\alpha_0}} \le t\le \sqrt{\frac{8\pi}{\alpha_0}}$. Then $t\omega_n(x)\geq t_0$ for $x\in B_{(\ln{n})^{q/2}/n}$ and large $n$. Then it follows from \eqref{16},\eqref{17} and Lemma \ref{lem6} that
    \begin{align}
    \label{19}
    \begin{split}
      \Phi(t\omega_n)&=\frac{t^2}{2}\|\omega_n\|^2+\frac{t^{2p}}{2p}I_0(\omega_n)-\int_{\mathbb{R}^2}F(x,t\omega_n)dx\\
      &\leq\frac{1+V_1\delta_n}{2}t^2-\frac{q\ln(\ln{n})}{4\ln{n}}t^2-\frac{2^{q/2}\pi^{1+q/2}(\ln{n})^q}{n^2t^qT_n^{q/2}}e^{\frac{\alpha_0}{2\pi}t^2T_n}\\
      &\leq\frac{1+V_1\delta_n}{2}t^2-\frac{q\ln(\ln{n})}{4\ln{n}}t^2-\frac{\alpha_0^{q/2}\pi(\ln{n})^q}{2^{q}n^2T_n^{q/2}}e^{\frac{\alpha_0}{2\pi}t^2T_n}=:\varphi_n(t),
      \end{split}
    \end{align}
    where
    $$T_n:=\ln{n}-q\ln(\ln{n})+\frac{q^2\ln^2(\ln{n})}{4\ln{n}}.$$
    Let $t_n>0$ be the unique maximum of $\varphi_n$ in $\R_+$, then (as $n\to\infty$)
    \begin{align}
      t_n^2&=\frac{4\pi}{\alpha_0}\left[1+\frac{(q-1)\ln(\ln{n})}{2\ln{n}}+O\left(\frac{1}{\ln{n}}\right)\right]
    \end{align}
    and
    \begin{align}
    \label{21}
      \varphi_n(t)\leq\varphi_n(t_n)&=\frac{1+V_1\delta_n}{2}t_n^2-\frac{q\ln(\ln{n})}{4\ln{n}}t_n^2 + O\left(\frac{1}{\ln{n}}\right).
    \end{align}
    Combining \eqref{19}-\eqref{21}, one has
    \begin{align}
      \Phi(t\omega_n)&\leq\varphi_n(t_n) =\frac{2\pi}{\alpha_0}-\frac{\ln(\ln{n})}{2\ln{n}}+O\left(\frac{1}{\ln{n}}\right).
    \end{align}

    Case (iii): $t\ge \sqrt{\frac{8\pi}{\alpha_0}}$. Again $t\omega_n(x)\geq t_0$ for $x\in B_{(\ln n)^{q/2}/n}$ for large $n$, it follows from \eqref{16},\eqref{17} and Lemma \ref{lem6} that
    \begin{align}
    \begin{split}
      \Phi(t\omega_n)&=\frac{t^2}{2}\|\omega_n\|^2+\frac{t^{2p}}{2p}I_0(\omega_n)-\int_{\mathbb{R}^2}F(x,t\omega_n)dx\\
      &\leq\frac{1+V_1\delta_n}{2}t^2-\frac{2^{q/2}\pi^{1+q/2}(\ln{n})^q}{n^2t^qT_n^{q/2}}e^{\frac{\alpha_0}{2\pi}t^2T_n}\\
      &\leq\frac{1+V_1\delta_n}{2}t^2-\frac{2^{q/2}\pi^{1+q/2}(\ln{n})^q}{t^qT_n^{q/2}}\exp\left[2\left(\frac{\alpha_0}{4\pi}t^2-1\right)T_n\right]\\
      &\leq\frac{4\pi(1+V_1\delta_n)}{\alpha_0}-\frac{\alpha_0^{q/2}\pi(\ln{n})^{q/2}}{2^{q}}n^{2} \\
      &\leq 0
      \end{split}
    \end{align}
    for large $n$. To get the third inequality, we used the fact that the function
    $$\frac{1+V_1\delta_n}{2}t^2-\frac{2^{q/2}\pi^{1+q/2}(\ln{n})^q}{t^qT_n^{q/2}}\exp\left[2\left(\frac{\alpha_0}{4\pi}t^2-1\right)T_n\right]$$
    is decreasing on $t\ge \sqrt{\frac{8\pi}{\alpha_0}}$ when $n$ is large enough.
  Combining the conclusions for cases (i)-(iii), the proof is completed.\end{proof}

\smallskip
Now we show the existence of Cerami sequence.
\begin{lemma}
\label{lem11}
  Assume that (V0),(VF),(F1) and (F5) hold. Then there exists a constant $\tilde{c}\in(0,\sup_{t\geq0}\Phi(t\omega_{n_0})]$ and a Cerami sequence $\{u_n\}\subset E_{k,p}$ such that
  \begin{align}
  \label{24}
    \Phi(u_n)\rightarrow\tilde{c},\;\|\Phi^{\prime}(u_n)\|_{X_p^{\ast}}(1+\|u_n\|_{X_p})\rightarrow0.
  \end{align}
  \end{lemma}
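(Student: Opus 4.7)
The plan is to apply the version of the mountain pass theorem in Lemma \ref{lem9} to the restriction of $\Phi$ to the closed subspace $Y = E_{k,p}$ of $X_p$ (the argument for $T_{k,p}$ is entirely parallel). Since $\Phi \in C^1(X_p, \mathbb{R})$ by Lemma \ref{lem5} and $\Phi(0) = 0$, it suffices to exhibit a separating set $S$ and a point $e$ satisfying the two geometric hypotheses.

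For the separating set I would take $S := \{u \in E_{k,p} : \|u\| = \rho\}$ with $\rho > 0$ small, to be chosen below. Since $u \mapsto \|u\|$ is continuous on $X_p$, the set $S$ is closed in $E_{k,p}$, and its complement decomposes into the convex open ball $Y_1 = \{u \in E_{k,p}: \|u\| < \rho\}$ (containing $0$) and the connected open exterior $Y_2 = \{u \in E_{k,p}: \|u\| > \rho\}$.

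To verify (i), I would exploit Lemma \ref{lem6} in the sharpened form $0 \le I_1(u) \le I_2(u)$, which combined with \eqref{10} and the Sobolev embedding $H^1(\mathbb{R}^2) \hookrightarrow L^{4p/3}(\mathbb{R}^2)$ yields $|I_0(u)| \le I_2(u) \le C\|u\|_{4p/3}^{2p} \le C'\|u\|^{2p}$. For the nonlinearity, (F1) and (F5) together with the Trudinger–Moser inequality (Lemma \ref{M-T}) give, for any $r > 2$ and any $\alpha > \alpha_0$, an estimate of the form $\int_{\mathbb{R}^2} |F(x,u)|\, dx \le \varepsilon \|u\|^{\sigma} + C_{\varepsilon,\alpha}\|u\|^{r}$, valid as soon as $\alpha \|u\|^2 < 4\pi$, where $\sigma = 2$ with $\varepsilon$ arbitrarily small when $p = 2$ (using $f(t) = o(t)$), while $\sigma = s_0 + 1 > 2$ when $p > 2$. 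Plugging these bounds into \eqref{Phi}, on $S$ we obtain $\Phi(u) \ge \tfrac{1}{2}\|u\|^2 - \tfrac{C'}{4p\pi}\|u\|^{2p} - \varepsilon\|u\|^{\sigma} - C_{\varepsilon,\alpha}\|u\|^{r}$; since all the exponents $2p, r, \sigma$ exceed $2$ (treating the borderline $p = 2$ case via the smallness of $\varepsilon$), shrinking $\rho$ gives $\Phi(u) \ge \tfrac{1}{4}\rho^2 =: \alpha_\ast > 0$ uniformly on $S$.

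Condition (ii) follows from Corollary \ref{cor7} applied to $\omega_{n_0}$ from Lemma \ref{lem10}: this function is radial with compact support, hence $\omega_{n_0} \in E_{k,p} \cap T_{k,p}$, and $\Phi(t\omega_{n_0}) \to -\infty$ as $t \to \infty$, so I can fix $t_0$ so large that $e := t_0 \omega_{n_0} \in Y_2$ and $\Phi(e) \le 0$. Lemma \ref{lem9} then delivers a Cerami sequence $\{u_n\} \subset E_{k,p}$ at a mountain pass level $\tilde c \ge \alpha_\ast > 0$ satisfying \eqref{24}; testing the minimax value on the straight-line path $\gamma(s) = s t_0 \omega_{n_0}$, $s \in [0,1]$, yields $\tilde c \le \max_{s \in [0,1]} \Phi(s t_0 \omega_{n_0}) \le \sup_{t \ge 0} \Phi(t\omega_{n_0})$, which is the required upper bound. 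The main obstacle is the verification of (i): one must combine the sign information of Lemma \ref{lem6} (so that the nonlocal term is at worst $O(\|u\|^{2p})$ rather than a possibly large negative multiple of $I_2$) with a Trudinger–Moser estimate of the nonlinear term that accommodates both sub-regimes of (F5); once this lower bound is secured, the remainder is a routine mountain pass application exploiting the specific test function provided by Lemma \ref{lem10}.
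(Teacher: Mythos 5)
Your proposal is correct and follows essentially the same route as the paper: a small sphere $S=\{\|u\|=\rho\}$ in $E_{k,p}$, the bound $I_0(u)\ge -I_2(u)\ge -C\|u\|^{2p}$ together with a Trudinger--Moser estimate of $\int F(x,u)\,dx$ for $\|u\|$ small to get $\Phi|_S\ge\alpha_\ast>0$, Corollary \ref{cor7} for the point $e=t_0\omega_{n_0}$, and Lemma \ref{lem9} plus the straight-line path to place $\tilde c$ in $(0,\sup_{t\ge0}\Phi(t\omega_{n_0})]$. The only cosmetic differences are that the paper uses a single unified smallness bound $|F(x,t)|\le\epsilon t^2+C_\epsilon(e^{3\alpha_0t^2/2}-1)|t|^3$ rather than splitting the cases $p=2$ and $p>2$, and it only needs $I_1\ge 0$ plus the Hardy--Littlewood--Sobolev bound on $I_2$ rather than the full strength of Lemma \ref{lem6} at this step.
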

  \begin{proof}
Applying the Sobolev embedding, for given $s\in[2,\infty)$, there exists $\gamma_s>0$ such that
    \begin{equation}
    \label{25}
      \|u\|_s\leq\gamma_s\|u\|,\;\;\forall\;u\in X_p.
    \end{equation}
    By (F1) and (F5), for any $\epsilon > 0$, there exists some constant $C_\epsilon>0$ such that
    \begin{equation}
    \label{26}
      |F(x,t)|\leq\epsilon t^2+C_\epsilon(e^{3\alpha_0 t^2/2}-1)|t|^3,\;\;\forall\;(x,t)\in\mathbb{R}^2\times\mathbb{R}.
    \end{equation}
    On the other hand, in view of Lemma \ref{M-T}, one has
    \begin{equation}
    \label{27}
      \int_{\mathbb{R}^2}\left(e^{3\alpha_0 u^2}-1\right)dx\leq C,\;\;\forall\;\|u\|\leq\sqrt{\frac{\pi}{\alpha_0}}.
    \end{equation}
    Let $\epsilon=\frac{1}{4\gamma_2^{2}}$, from \eqref{25}-\eqref{27}, there holds
    \begin{equation}
    \label{28}
      \int_{\mathbb{R}^2}F(x,u)dx\leq\frac{1}{4}\|u\|^2+C_3\|u\|^3,\;\;\forall\;\|u\|\leq\sqrt{\frac{\pi}{\alpha_0}}.
    \end{equation}
    Hence, it follows from \eqref{Phi} and \eqref{28} that if $\|u\|\leq\sqrt{\frac{\pi}{\alpha_0}}$,
    \begin{align}
    \label{29}
    \begin{split}
    \Phi(u)&=\frac{1}{2}\|u\|^2+\frac{1}{4p\pi}(I_1(u)-I_2(u))-\int_{\mathbb{R}^2}F(x,u)dx\\
    &\geq\frac{1}{4}\|u\|^2-C_3\|u\|^3-C_4\|u\|^{2p}.
    \end{split}
    \end{align}
    Therefore, there exists $\kappa_0>0$ and $0<\rho<\sqrt{\frac{\pi}{\alpha_0}}$ such that
    \begin{equation}
      \Phi(u)\geq\kappa_0,\;\;\forall\; u\in S:=\{u\in E_{k,p}:\|u\|=\rho\}.
    \end{equation}
    Since $\lim_{t\rightarrow\infty}\Phi(t\omega_{n_0})=-\infty$, we can choose $t^*>0$ such that $e=t^*\omega_{n_0}\in Y_2:=\{u\in E_{k,p}:\|u\|>\rho\}$ and $\Phi(e)<0$. Let $Y_1:=\{u\in E_{k,p}:\|u\|\leq\rho\}$, then in view of Lemma \ref{lem9}, one deduces that there exists $\tilde{c}\in[\kappa_0,\sup_{t\geq0}\Phi(t\omega_{n_0})]$ and a Cerami sequence $\{u_n\}\subset E_{k,p}$ satisfying \eqref{24}.
  \end{proof}

\begin{lemma}
\label{lem12}
Assume that (V0),(VF),(F1) and (F4)-(F5) hold. Then any sequence satisfying \eqref{24} is bounded w.r.t.~$\|\cdot\|$.
\end{lemma}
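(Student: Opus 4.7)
The plan is to combine the two Cerami identities: the energy identity $\Phi(u_n)\to\tilde c$ and the derivative identity $\langle\Phi'(u_n),u_n\rangle=o(1)$, the second of which follows by pairing $\|\Phi'(u_n)\|_{X_p^*}(1+\|u_n\|_{X_p})\to 0$ with the test function $u_n$ itself. Using \eqref{Phi} and Lemma \ref{lem5}, the central algebraic identity is
\begin{equation*}
\mu_1\Phi(u_n)-\langle\Phi'(u_n),u_n\rangle=\Bigl(\frac{\mu_1}{2}-1\Bigr)\|u_n\|^2+\frac{\mu_1-2p}{4p\pi}I_0(u_n)+\int_{\mathbb{R}^2}\bigl[f(x,u_n)u_n-\mu_1 F(x,u_n)\bigr]dx,
\end{equation*}
whose left-hand side is bounded above by $\mu_1\tilde c+o(1)$.

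The next step is to control the integrand by applying (F4) pointwise: $f(x,u_n)u_n-\mu_1 F(x,u_n)\ge-\mu_2 V(x)u_n^2$. Integrating and using $\int V u_n^2\,dx\le\|u_n\|^2$ (from the definition \eqref{3} of $\|\cdot\|$), together with a case-split on the sign of $\mu_2$, yields
\begin{equation*}
\int_{\mathbb{R}^2}\bigl[f(x,u_n)u_n-\mu_1 F(x,u_n)\bigr]dx\ge-\mu_2^+\|u_n\|^2,\qquad\mu_2^+:=\max(\mu_2,0),
\end{equation*}
and the assumptions $\mu_1>2$, $\mu_2<\mu_1/2-1$ together guarantee that the coefficient $\mu_1/2-1-\mu_2^+$ is strictly positive.

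The nonlocal term is where Lemma \ref{lem6} enters decisively: since $I_0(u_n)\le 0$, in the natural regime $\mu_1\le 2p$ the product $\frac{\mu_1-2p}{4p\pi}I_0(u_n)$ is non-negative and can be dropped from the lower bound. Assembling everything,
\begin{equation*}
\Bigl(\frac{\mu_1}{2}-1-\mu_2^+\Bigr)\|u_n\|^2\le\mu_1\tilde c+o(1),
\end{equation*}
and the bound on $\|u_n\|$ follows. The main technical subtlety I anticipate is the regime $\mu_1>2p$, where the $I_0$-coefficient reverses sign and Lemma \ref{lem6} no longer points in the useful direction. Since $F\ge 0$ (from (F2)), the inequality in (F4) persists under decreasing $\mu_1$ to any value $\mu_1'>2(1+\mu_2)$, so one can usually reduce WLOG to $\mu_1\le 2p$; in the remaining edge case a finer absorption argument exploiting $|I_0(u_n)|\le I_2(u_n)\le C\|u_n\|_{4p/3}^{2p}$ from \eqref{10} would be required.
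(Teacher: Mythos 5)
Your argument is correct and follows essentially the same route as the paper: the paper's proof makes the identical WLOG reduction to $\mu_1\in(2,2p]$, $0\le\mu_2\le\frac{\mu_1}{2}-1$, forms the combination $\Phi(u_n)-\lambda_0\langle\Phi'(u_n),u_n\rangle$ with $\lambda_0\in\left(\frac{1}{\mu_1},\frac12-\frac{\mu_2}{\mu_1}\right)$, discards the nonlocal term via Lemma \ref{lem6} exactly as you do, and absorbs $\mu_2\int V u_n^2\,dx$ into $\|u_n\|^2$. The only substantive difference is that the paper takes $\lambda_0$ strictly larger than $\frac{1}{\mu_1}$ (rather than your effective $\lambda_0=\frac{1}{\mu_1}$) so that the same estimate also delivers $\int_{\mathbb{R}^2}f(x,u_n)u_n\,dx\le C$, which is reused later as \eqref{32}; as for the regime $\mu_1>2p$ that you flag, the paper simply asserts the reduction without comment, so your treatment is if anything more candid.
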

\begin{proof}
Without loss of generality, we assume that $\mu_1\in(2,2p]$ and $0 \leq \mu_2 \leq \frac{\mu_1}{2}-1$. Choosing $\lambda_0\in\left(\frac{1}{\mu_1},\frac{1}{2}-\frac{\mu_2}{\mu_1}\right)$, by \eqref{Phi},\eqref{diff} and Lemma \ref{lem9},there holds
  \begin{align}
  \label{31}
  \begin{split}
  \tilde{c}+o(1)& = \Phi(u_n)-\lambda_0\langle\Phi^\prime(u_n),u_n\rangle_{\langle X_p^\prime,X_p\rangle}\\
  & = \left(\frac{1}{2}-\lambda_0\right)\|u_n\|^2+\frac{1}{2\pi}\left(\frac{1}{2p}-\lambda_0\right)I_0(u_n)\\
  &\quad +\int_{\mathbb{R}^2}[\lambda_0f(x,u_n)u_n-F(x,u_n)]dx\\
  & \geq\left(\frac{1}{2}-\lambda_0\right)\|u_n\|^2-\frac{\mu_2}{\mu_1}\int_{\mathbb{R}^2}V(x)u_n^2dx+\left(\lambda_0-\frac{1}{\mu_1}\right)\int_{\mathbb{R}^2}f(x,u_n)u_ndx\\
  & \geq \left(\frac{1}{2}-\frac{\mu_2}{\mu_1}-\lambda_0\right)\|u_n\|^2 +\left(\lambda_0-\frac{1}{\mu_1}\right)\int_{\mathbb{R}^2}f(x,u_n)u_ndx.
  \end{split}
  \end{align}
  We derive that $\|u_n\|$ is bounded.
\end{proof}

\section{Proof of Theorem \ref{thm1}}
Now we are ready to prove Theorem \ref{thm1}.  We only consider the function space $E_{k,p}$, because the $T_{k,p}$ case is very similar.

Applying Lemmas \ref{lem11} and \ref{lem12}, there exists a sequence $\{u_n\}\subset E_{k,p}$ satisfying \eqref{24} and $\|u_n\|\leq C$ for some constant $C>0$. Moreover, by \eqref{31}, we have
\begin{equation}
\label{32}
  \int_{\mathbb{R}^2}f(x,u_n)u_ndx\leq C.
\end{equation}
Next, we prove the Theorem \ref{thm1} in three steps.

\medskip
\noindent\textbf{Step 1:} $\{u_n\}$ is bounded in $E_{k,p}$.

We claim first $\delta_0:=\limsup_{n\rightarrow\infty}\|u_n\|_p > 0$. Suppose the contrary $\delta_0 = 0$, then from the Gagliardo-Nirenberg inequality (see \cite[p.125]{Nirenberg1959}):
\begin{equation}
  \|u_n\|^{s}_{s}\leq C_s\|u_n\|_{p}^{\theta}\|\nabla u_n\|_{2}^{1-\theta},
\end{equation}
where $2\leq p<t<\infty,\theta=\frac{p}{t}$. Hence
$u_n\rightarrow0$ in $L^{\eta}(\mathbb{R}^2)$ for $\eta\in(2,+\infty)$. Given any $\varepsilon\in(0,M_0C_{10}/t_2)$, we choose $M_\varepsilon>M_0C_{10}/\varepsilon$, then it follows from (F2) and \eqref{32} that
\begin{align}
\label{34}
\begin{split}
  \int_{|u_n|\geq M_\varepsilon}F(x,u_n)dx&\leq M_0\int_{|u_n|\geq M_\varepsilon}|f(x,u_n)|dx\\
  &\leq\frac{M_0}{M_\varepsilon}\int_{|u_n|\geq M_\varepsilon}f(x,u_n)u_ndx<\varepsilon.
\end{split}
\end{align}
Applying (F5), one has
\begin{equation}
\label{35}
\int_{|u_n|\leq M_\varepsilon}F(x,u_n)dx\leq
  \begin{cases}
  C_\varepsilon\|u_n\|_{2}^{2}=o(1),&\text{$p=2$},\\
  C_\varepsilon\|u_n\|_{s+1}^{s+1}=o(1),&\text{$p>2$}\\
  \end{cases}
\end{equation}
and
\begin{equation}
\label{36}
  \int_{|u_n|\leq1}f(x,u_n)u_ndx\leq
  \begin{cases}
  C\|u_n\|_{2}^{2}=o(1),&\text{$p=2$},\\
  C\|u_n\|_{s+1}^{s+1}=o(1),&\text{$p>2$}.\\
  \end{cases}
\end{equation}
By the arbitrariness of $\varepsilon>0$, we deduce from (F2),\eqref{34} and \eqref{35} that
\begin{equation}
\label{37}
  \int_{\mathbb{R}^2}F(x,u_n)dx=o(1).
\end{equation}
Hence, by \eqref{10},\eqref{Phi},\eqref{24} and \eqref{37}, Lemmas \ref{lem10} and \ref{lem11}, there is $\bar{\varepsilon}>0$ such that
\begin{equation}
\label{38}
  \|u_n\|^2\leq2\tilde{c}+o(1):=\frac{4\pi}{\alpha_0}(1-3\bar{\varepsilon})+o(1).
\end{equation}
Now let $d\in(1,\frac{p}{p-1})$ satisfying
\begin{equation}
\label{39}
  \frac{(1+\bar{\varepsilon})(1-3\bar{\varepsilon})d}{1-\bar{\varepsilon}}<1.
\end{equation}
By (F1), there exists $C>0$ such that
\begin{equation}
\label{40}
  |f(x,t)|^d\leq C\left[e^{\alpha_0(1+\bar{\varepsilon})dt^2}-1\right],\;\;\forall\;x\in\mathbb{R}^2,|t|\geq1.
\end{equation}
It follows from \eqref{38}-\eqref{40} and Lemma \ref{M-T} that
\begin{align}
\label{41}
  \int_{|u_n|\geq1}|f(x,u_n)|^ddx&\leq C\int_{\mathbb{R}^2}\left[e^{\alpha_0(1+\bar{\varepsilon})du_n^2}-1\right]dx\nonumber\\
  &=C\int_{\mathbb{R}^2}\left[e^{\alpha_0(1+\bar{\varepsilon})d\|u_n\|^2(u_n/\|u_n\|)^2}-1\right]dx\leq C.
\end{align}
As $d^{\prime}= \frac{d}{d-1} >p$, using \eqref{41}, there holds
\begin{equation}
\label{42}
  \int_{|u_n|\geq1}f(x,u_n)u_ndx\leq\left[\int_{|u_n|\geq1}|f(x,u_n)|^qdx\right]^{1/d}\|u_n\|_{d^{\prime}}=o(1).
\end{equation}
Combining \eqref{10}-\eqref{diff},\eqref{24},\eqref{36} and \eqref{42}, we arrive at
\begin{align}
\begin{split}
  \tilde{c}+o(1)=&\;\Phi(u_n)-\frac{1}{2}\langle \Phi^{\prime}(u_n),u_n\rangle\\
  =&-\left(\frac{1}{4\pi}-\frac{1}{4p\pi}\right)I_1(u_n)+\left(\frac{1}{4\pi}-\frac{1}{4p\pi}\right)I_2(u_n)\\
  &+\int_{\mathbb{R}^2}\left[\frac{1}{2}f(x,u_n)u_n-F(x,u_n)\right]dx\\
  \leq&\; o(1).
\end{split}
\end{align}
This contradiction shows that $\delta_0>0$. Now from \eqref{10} and Lemmas \ref{lem6} and \ref{lem12}, one has
\begin{equation*}
  I_1(u_n)\leq I_2(u_n)\leq C,
\end{equation*}
which, together with Lemma \ref{lem8}, implies that $\|u_n\|_{\ast}$ is bounded, and so $\{u_n\}$ is bounded in $E_{k,p}$.

\medskip
\noindent\textbf{Step 2:} $\Phi'(\bar{u})=0$ in $E_{k,p}^{\prime}$.

We may assume, by \cite[Lemma 2.3]{Cao_Dai_Zhang} and passing to a subsequence again if necessary, that $u_n\rightharpoonup\bar{u}$ in $E_{k,p}$, $u_n\rightarrow\bar{u}$ a.e. on $\mathbb{R}^2$ and
\begin{equation*}
  u_n\rightarrow\bar{u}\;\;\mbox{in}\;\;L^{s}(\mathbb{R}^2),
\end{equation*}
where $s\in[2,\infty)$ if $p=2$ and $s\in(2,\infty)$ if $p>2$.
\noindent Let $M:=\sup_{n}\|\nabla u_n\|_2$. By \eqref{38}, one has $M^2<\frac{4\pi}{\alpha_0}$. Therefore, there is $\beta>p$ big enough such that $\frac{M^2\beta}{\beta-1}<\frac{4\pi}{\alpha_0}$. Without loss of generality, we assume that $s_0\in(1,2)$. Then it follows (F5) and Lemma \ref{M-T} that
\begin{align}
\label{45}
  &\;\int_{\mathbb{R}^2}|f(x,u_n)(u_n-\bar{u})|dx\nonumber\\
  \leq&\;\int_{\{|u_n|<1\}}|f(x,u_n)(u_n-\bar{u})|dx+\int_{\{|u_n|\geq1\}}|f(x,u_n)(u_n-\bar{u})|dx\nonumber\\
  \leq&\; C\|u_n\|_2\|u_n-\bar{u}\|_{\frac{2}{2-s_0}}+C\|u_n-\bar{u}\|_\beta&\nonumber\\
  =&\; o(1).
\end{align}
Similarly, one has
\begin{equation}
  \int_{\mathbb{R}^2}|f(x,\bar{u})(u_n-\bar{u})|dx=o(1).
\end{equation}
Furthermore, it follows from \eqref{9},\eqref{10} and H\"{o}lder inequality that
\begin{equation}
\label{47}
  A_2(|u_n|^p,|u_n|^{p-2}u_n(u_n-\bar{u}))=o(1),\;\;A_2(|\bar{u}|^p,|\bar{u}|^{p-2}\bar{u}(u_n-\bar{u}))=o(1).
\end{equation}
By \cite[Lemma 3.3]{Cao_Dai_Zhang}, we have
\begin{equation}
\label{47a}
  A_1(|u_n|^p,|u_n|^{p-2}\bar{u}(u_n-\bar{u}))=o(1),\;\;A_1(|\bar{u}|^p,|\bar{u}|^{p-2}\bar{u}(u_n-\bar{u}))=o(1).
\end{equation}
Combining \eqref{Phi},\eqref{diff},\eqref{24},\eqref{45}-\eqref{47a}, there holds
\begin{align}
\label{48}
\begin{split}
  o(1)=&\;\langle \Phi^{\prime}(u_n)-\Phi^{\prime}(\bar{u}),u_n-\bar{u}\rangle_{\langle X_p',X_p\rangle}\\
  =&\;\|u_n-\bar{u}\|^2+\frac{1}{2\pi}A_1(|u_n|^p,|u_n|^{p-2}(u_n-\bar{u})^2)\\
  &+\frac{1}{2\pi}A_1(|u_n|^p,|u_n|^{p-2}\bar{u}(u_n-\bar{u}))-\frac{1}{2\pi}A_1(|\bar{u}|^p,|\bar{u}|^{p-2}\bar{u}(u_n-\bar{u}))\\
  &+\frac{1}{2\pi}A_2(|\bar{u}|^p,|\bar{u}|^{p-2}\bar{u}(u_n-\bar{u}))-\frac{1}{2\pi}A_2(|u_n|^p,|u_n|^{p-2}u_n(u_n-\bar{u}))\\
  &+\int_{\mathbb{R}^2}f(x,\bar{u})(u_n-\bar{u})dx-\int_{\mathbb{R}^2}f(x,u_n)(u_n-\bar{u})dx+o(1)\\
  \geq&\;\|u_n-\bar{u}\|^2+o(1).
  \end{split}
\end{align}
By \eqref{10},\eqref{48} and Lemma \ref{M-T}, we have
\begin{equation}
\label{48a}
  A_2(|u_n|^p,|v_n|^{p})=o(1),\;\;\langle u_n,u_n-\bar{u}\rangle=o(1),\;\;\int_{\mathbb{R}^2}f(x,u_n)(u_n-\bar{u})dx=o(1),
\end{equation}
where $|v_n|^p:=|u_n|^{p-2}|u_n-\bar{u}|^2$ for every $n\in\mathbb{N}$. By \eqref{24} and \eqref{48a}, one has
\begin{align*}
  o(1)= \langle\Phi'(u_n),u_n-\bar{u}\rangle_{\langle X_p',X_p\rangle} & = \frac{1}{2\pi}A_1(|u_n|^p,|v_n|^p)-\frac{1}{2\pi}A_2(|u_n|^p,|v_n|^p)\\
  & \quad +\langle u_n,u_n-\bar{u}\rangle+\int_{\mathbb{R}^2}f(x,u_n)(u_n-\bar{u})dx\\
  & = \frac{1}{2\pi}A_1(|u_n|^p,|v_n|^p)+o(1)
\end{align*}
which, together with Lemma \ref{lem8}, implies
\begin{equation}
\label{49}
\lim_{n\rightarrow\infty}(\|v_n\|_p+\|v_n\|_{\ast})\rightarrow0.
\end{equation}
It is well known that for any $\epsilon > 0$,
\begin{equation}
\label{50}
\big||a+b|^\theta-|b|^\theta\big|\leq\epsilon|b|^\theta+C_{\epsilon,\theta}|a|^\theta,\;\forall\;a,b\in\mathbb{R}.
\end{equation}
From \eqref{49},\eqref{50} (take $\epsilon=\frac{1}{2}$ therein) and \cite[Lemma 3.3]{Cao_Dai_Zhang}, one has
\begin{align}
\begin{split}
  \|u_n-\bar{u}\|_{\ast}^p&=\int_{\mathbb{R}^2}\ln(1+|x|)(|u_n-\bar{u}|^{p-2}-|u_n|^{p-2})|u_n-\bar{u}|^2dx+o(1)\\
  &\leq\frac{1}{2}\|u_n-\bar{u}\|_{\ast}^p+C\int_{\mathbb{R}^2}\ln(1+|x|)|\bar{u}|^{p-2}|u_n-\bar{u}|^2dx+o(1)\\
  &=\frac{1}{2}\|u_n-\bar{u}\|_{\ast}^p+o(1).
  \end{split}
\end{align}
Combining with \eqref{48}, we have $u_n\rightarrow\bar{u}$ in $E_{k,p}$. Hence, $0<\tilde{c}=\lim_{n\rightarrow\infty}\Phi(u_n)=\Phi(\bar{u})$ and $\Phi^{\prime}(\bar{u})=0$ in $E_{k,p}$.

\medskip
\noindent\textbf{Step 3:} $\Phi'(\bar{u})=0$ in $X_{p}^{\prime}$.

Now we will conclude $\Phi^{\prime}(\bar{u})=0$, using the group action on the space $X_p$. The guideline of the proof holds for more general cases. Let $G\subset\mathcal{O}(2)$ be a finite group of transforms acting on $X_p$, where $\mathcal{O}(2)$ denotes the group of orthogonal transformations in $\mathbb{R}^2$. The action of $G$ on the space $X_p$ is a continuous map (see \cite[Definition 1.27]{Willem1996}):
$$G\times X_p\rightarrow X_p:[\tau,u]\rightarrow \tau(u) = u \circ \tau.$$
Assume that $\varphi\in C^1(X_p,\mathbb{R})$ is invariant by $G$, that is $\varphi(w\circ\tau) = \varphi(w)$ for any $\tau \in G$, $w \in X_p$. Let $u$ be a critical point of $\varphi$ in $X_{p,G}$, where
$$X_{p,G}:=\{u\in X_p:\tau u=u,\;\forall\;\tau\in G\}.$$
We claim that $\varphi^{\prime}(u)=0$ in $X_p$. In fact, given any $v\in X_p$, we define
$$\bar{v}=\frac{1}{\#(G)}\sum_{\tau\in G}\tau v,$$
where $\#(G)$ denotes the cardinal of $G$. Clearly, $\bar{v}\in X_{p,G}$. Therefore, one has
\begin{align*}
0 = \langle\varphi^{\prime}(u),\bar v\rangle = \frac{1}{\#(G)}\sum_{\tau\in G} \langle \varphi^{\prime}(u), \tau v\rangle & = \frac{1}{\#(G) }\sum_{\tau\in G} \langle \varphi'(u)\circ\tau^{-1}, v\rangle\\
&  = \frac{1}{\#(G) }\sum_{\tau\in G} \langle \varphi'(u\circ\tau^{-1})\circ\tau^{-1}, v\rangle\\
& = \langle\varphi^{\prime}(u),v \rangle.
\end{align*}
For the second line, we used the fact $u \in X_{p, G}$. So we have $\varphi^{\prime}(u)=0$ in $X_p$.

The two cases in Theorem \ref{thm1} are direct consequence of the above discussion. Indeed, let $G_1$ be the subgroup of $\mathcal{O}(2)$ generated by $z\mapsto ze^{2\pi i/k}$, then $X_{p, G_1} = E_{k, p}$. If $G_2$ is generated by $z\mapsto ze^{2\pi i/k}$ and $z\mapsto \bar z$, $X_{p, G_2} = T_{k, p}$. So $\Phi'(\bar{u}) = 0$ in $X_p'$.\hfill$\Box$

\section{Proof of Theorem \ref{thm3}}

To prove Theorem \ref{thm3}, we show several Lemmas. As in the previous section, we will only consider the $E_{k,p}$ case.
\begin{lemma}
\label{lem13}
Assume that (V0),(VF),(F1),(F4') and (F5) hold, the Nerahi manifold $\mathcal{N}_1$ satisfies
\begin{itemize}
\item[(1)] There exists a constant $c_\ast\in(0,m_1]$ and a sequence $\{u_n\}\subset E_{k,p}$ satisfying
  \begin{equation}
  \label{53}
    \Phi(u_n)\rightarrow c_\ast,\;\;\|\Phi^{\prime}(u_n)\|_{X_p^{\prime}}(1+\|u_n\|_{X_p})\rightarrow0,
  \end{equation}
  where $m_1:=\inf_{\mathcal{N}_1}\Phi(u)$.
\item[(2)] For any $u\in E_{k,p}\backslash\{0\}$, there exists a unique $t_u>0$ such that $t_uu\in\mathcal{N}_1$. Moreover, we have
$$m_1=\inf_{E_{k,p}\backslash\{0\}}\max_{t\geq0}\Phi(tu).$$
\end{itemize}
\end{lemma}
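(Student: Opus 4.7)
The plan is to establish part (2) first---the existence of $t_u$, its uniqueness, and the minimax identity---and then derive part (1) from (2) via a flexible mountain-pass argument. For existence, fix $u \in E_{k,p}\setminus\{0\}$ and set $\psi_u(t) := \Phi(tu)$ for $t \geq 0$. The Moser--Trudinger estimate \eqref{29} from the proof of Lemma~\ref{lem11} gives $\psi_u(t) \geq \tfrac{t^2}{4}\|u\|^2 - C t^3\|u\|^3 - C t^{2p}\|u\|^{2p}>0$ for $t$ small, while Corollary~\ref{cor7} yields $\psi_u(t) \to -\infty$ as $t \to \infty$. Hence $\psi_u$ attains a positive maximum at some $t_u > 0$ with $\psi_u'(t_u) = 0$, i.e.~$t_u u \in \mathcal{N}_1$.

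For uniqueness I will show that $t \mapsto \psi_u'(t)/t^{2p-1}$ is strictly decreasing on $(0, \infty)$. Writing, for $t > 0$,
$$f(x, tu) = g_p(x, tu)\,|tu|^{2p-1} + \mu V(x)(tu),$$
where $\mu := 1$ if $p = 2$ and $\mu$ is the constant in (F4') if $p > 2$, and using the identity $|tu|^{2p-1}u\,t^{-(2p-1)} = \mathrm{sgn}(u)|u|^{2p}$, a direct computation gives
$$\frac{\psi_u'(t)}{t^{2p-1}} = t^{2-2p}\!\int_{\R^2}\!\bigl(|\nabla u|^2 + (1-\mu)V(x)u^2\bigr)dx + \frac{I_0(u)}{2\pi} - \int_{\R^2}\! g_p(x, tu)\,\mathrm{sgn}\bigl(u(x)\bigr)\,|u(x)|^{2p}\,dx.$$
The first summand is strictly decreasing in $t > 0$, since $2-2p < 0$, $\int|\nabla u|^2 > 0$ (as $u \neq 0$ in $H^1(\R^2)$), and the coefficient $(1-\mu) \geq 0$; the second is $t$-independent; and the third is non-increasing in $t$ because at every $x$ with $u(x) \neq 0$ the factor $g_p(x, tu)\,\mathrm{sgn}(u(x))$ is non-decreasing in $t > 0$: when $u(x) > 0$ one has $tu \in (0,\infty)$ and (F4') on $(0,\infty)$ applies directly, while when $u(x) < 0$ one has $tu \in (-\infty, 0)$, $tu$ decreases as $t$ grows, (F4') on $(-\infty, 0)$ then says $t \mapsto g_p(x, tu)$ is non-increasing, and the sign $\mathrm{sgn}(u(x)) = -1$ flips it back. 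Hence $\psi_u'/t^{2p-1}$ is strictly decreasing and has exactly one positive zero, which must be the $t_u$ already constructed. Since for every $v \in \mathcal{N}_1$ uniqueness forces $t_v = 1$, the minimax identity $\inf_{u \neq 0}\max_{t \geq 0}\Phi(tu) = \inf_{v \in \mathcal{N}_1}\Phi(v) = m_1$ follows.

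For (1) I will re-run the mountain-pass scheme of Lemmas~\ref{lem10}--\ref{lem11} with the endpoint chosen via (2). For each $\eta > 0$, pick $v_\eta \in \mathcal{N}_1$ with $\Phi(v_\eta) \leq m_1 + \eta$ and $T_\eta > 1$ such that $\Phi(T_\eta v_\eta) < 0$ (from Corollary~\ref{cor7}). Applying Lemma~\ref{lem9} with $e := T_\eta v_\eta$, the straight-line path $s \mapsto s T_\eta v_\eta$ is admissible, and by part (2) its maximum equals $\max_{t \geq 0}\Phi(tv_\eta) = \Phi(v_\eta) \leq m_1 + \eta$. The MP geometry (the ring $\|u\|=\rho$ with lower bound $\kappa_0$ constructed in Lemma~\ref{lem11}) depends only on the norm, not on the endpoint, so Lemma~\ref{lem9} yields a Cerami sequence at a MP level $\tilde c_\eta \in [\kappa_0, m_1+\eta]$. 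A standard diagonal extraction as $\eta \to 0$ then produces a single sequence $\{u_n\} \subset E_{k,p}$ satisfying \eqref{53} at a limit level $c_* \in [\kappa_0, m_1] \subset (0, m_1]$. The main difficulty is the sign-changing case of the uniqueness step, where the one-sided monotonicity of $g_p$ must be carefully combined with $\mathrm{sgn}(u(x))$; the role of $\mu < 1$ in (F4') for $p > 2$ is precisely to ensure $(1-\mu) > 0$, so that the quadratic contribution to $\psi_u'/t^{2p-1}$ remains strictly decreasing (for $p = 2$ the gradient term alone already suffices).
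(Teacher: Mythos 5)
Your proposal is correct, and two of its three pieces coincide with the paper's: the existence of $t_u$ (positivity of $\Phi(tu)$ for small $t$ via \eqref{29} together with Corollary \ref{cor7}) and the derivation of part (1) by re-running the mountain pass of Lemma \ref{lem11} with endpoints built from near-minimizers $v_\eta\in\mathcal{N}_1$, bounding the level by $\max_{t\ge0}\Phi(tv_\eta)=\Phi(v_\eta)\le m_1+\eta$, and diagonalizing (the paper does exactly this, citing \cite{Chen_Shi_Tang} for the diagonal step). Where you genuinely diverge is the uniqueness of $t_u$. The paper's key tool is the global inequality
\begin{equation*}
\Phi(u)\ \ge\ \Phi(tu)+\frac{1-t^{2p}}{2p}\langle\Phi'(u),u\rangle+\frac{t^{2p}-pt^2+p-1}{2p}\|u\|^2,
\end{equation*}
obtained by rewriting $\Phi(u)-\Phi(tu)$ so that (F4') makes the remainder $\int_{\R^2}\int_t^1\bigl[g_p(x,u)-g_p(x,su)\bigr]s^{2p-1}|u|^{2p-1}u\,ds\,dx$ non-negative; uniqueness then follows by applying this at two critical points $t_1,t_2$ and adding, and the same inequality immediately gives $\Phi(v)=\max_{t\ge0}\Phi(tv)$ on $\mathcal{N}_1$. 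You instead prove monotonicity of the fibering derivative $\psi_u'(t)/t^{2p-1}$ via the decomposition $f(x,s)=g_p(x,s)|s|^{2p-1}+\mu V(x)s$ and the observation that $g_p(x,tu)\,\sign{u}$ is non-decreasing in $t$ on $\{u\neq0\}$; this is the classical fibering-map argument, it uses (F4') in the same way, and it yields the $\mathcal{N}_1$ max-property as a byproduct of the unique critical point. A minor advantage of your bookkeeping is that it makes explicit where strict positivity comes from, namely the coefficient $\int_{\R^2}\bigl(|\nabla u|^2+(1-\mu)V(x)u^2\bigr)dx>0$ of the strictly decreasing term $t^{2-2p}$; if one tracks the $V$-terms in the paper's chain of equalities carefully, the coefficient of $\frac{t^{2p}-pt^2+p-1}{2p}$ is really this same quantity rather than the full $\|u\|^2$, which is harmless for the lemma but is exactly the point your computation gets right. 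The paper's inequality is more quantitative (it is reused in spirit in Lemma \ref{lem14}), but for Lemma \ref{lem13} both routes deliver the same conclusions.
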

  \begin{proof}
Firstly, we prove that
  \begin{equation}
  \label{54}
    \Phi(u)\geq \Phi(tu)+\frac{1-t^{2p}}{2p}\langle \Phi^\prime(u),u\rangle+\frac{t^{2p}-pt^2+p-1}{2p}\|u\|^2,\;\;\forall\; u\in X_p,\;t\geq0.
  \end{equation}
    Indeed, by \eqref{Phi} and \eqref{diff}, one has
    \begin{align*}
      \Phi(u)-\Phi(tu) & = \frac{1-t^2}{2}\|u\|^2+\frac{1-t^{2p}}{4p\pi}I_0(u)+\int_{\mathbb{R}^2}[F(x,tu)-F(x,u)]dx\\
      & = \frac{1-t^{2p}}{2p}\langle \Phi^{\prime}(u),u\rangle+\frac{t^{2p}-pt^2+p-1}{2p}\|u\|^2\\
      &\quad +\int_{\mathbb{R}^2}\left[\frac{1-t^{2p}}{2p}f(x,u)u+F(x,tu)-F(x,u)\right]dx\\
      & = \frac{1-t^{2p}}{2p}\langle \Phi^{\prime}(u),u\rangle+\frac{t^{2p}-pt^2+p-1}{2p}\|u\|^2\\
      &\quad +\int_{\mathbb{R}^2}\int_{t}^{1}\left[\frac{f(x,u)- V(x)u}{|u|^{2p-1}}-\frac{f(x,su)-V(x)su}{|su|^{2p-1}}\right]s^{2p-1}|u|^{2p-1}udsdx\\
      & \geq \frac{1-t^{2p}}{2p}\langle \Phi^{\prime}(u),u\rangle+\frac{t^{2p}-pt^2+p-1}{2p}\|u\|^2.
    \end{align*}
    Secondly, let $g(t):=t^{2p}-pt^2+p-1$, there holds
    $$\min_{t\geq0}g(t)=g(1)=0.$$
    Hence, by \eqref{54}, one has
    \begin{equation}
    \label{54a}
    \Phi(u)=\max_{t\geq0}\Phi(tu),\;\;\forall\; u\in\mathcal{N}_1.
    \end{equation}
    Finally, choose $v_k\in\mathcal{N}_1$ such that
    $$m_1\leq\Phi(v_k)\leq m_1+\frac{1}{k},\;\;k\in\mathbb{N}.$$
    For any $v_k$, similarly to Lemma \ref{lem11}, we can obtain a Cerami sequence $\{u_{k,n}\}_n\subset E_{k,p}$ such that
    $$\Phi(u_{k,n})\rightarrow c_k,\;\;\|\Phi^\prime(u_{k,n})\|_{X_p^{\prime}}(1+\|u_n\|_{X_p})\rightarrow0,\;\;k\in\mathbb{N}$$
    with $c_k\in(0,\sup_{t\geq0}\Phi(tv_k)]$. By \eqref{54a} and the diagonal rule, we get a Cerami sequence $\{u_n\}\subset E_{k,p}$ such that
    \begin{equation}
    \Phi(u_n)\rightarrow c_\ast,\;\;\|\Phi^{\prime}(u_n)\|_{X_p^{\prime}}(1+\|u_n\|_{X_p})\rightarrow0
    \end{equation}
    with $c_\ast\in(0,m_1]$. Here we refer the readers to \cite[Lemma 3.2]{Chen_Shi_Tang} for more detail. So we can claim the point (1).

    \medskip
    Next, we consider the point (2). Let $u\in E_{k,p}\backslash\{0\}$ be fixed and $\zeta(t):=\Phi(tu)$ on $[0,\infty)$. By the definition \eqref{Phi},
    $$\zeta^{\prime}(t)=0\iff t^2\|u\|^2+\frac{t^{2p}}{2\pi}I_0(u)-\int_{\mathbb{R}^2}f(x,tu)tudx=0\iff tu\in\mathcal{N}_1.$$
    Using \eqref{29},(F1) and Lemma \ref{lem6}, one has $\zeta(0)=0$, $\zeta(t)>0$ for $t>0$ small and $\zeta(t)<0$ for $t$ large. Therefore $\max_{t\in(0,\infty)}\zeta(t)$ is achieved at some $t_u>0$ so that $\zeta^{\prime}(t_u)=0$ and $t_uu\in\mathcal{N}_1$. Now we claim that $t_u$ is unique. In fact, for any given $u\in E_{k,p}\backslash\{0\}$, let $t_1,t_2>0$ such that $\zeta^{\prime}(t_1)=\zeta^{\prime}(t_2)=0$. By \eqref{54}, taking $t=\frac{t_2}{t_1}$ and $t=\frac{t_1}{t_2}$ respectively, it implies
    $$\Phi(t_1u)\geq\Phi(t_2u)+\frac{t_1^2 g(t_2/t_1)}{2p}\|u\|^2, \quad \mbox{and}\quad \Phi(t_2u)\geq\Phi(t_1u)+ \frac{t_2^2 g(t_1/t_2)}{2p}\|u\|^2$$
    Therefore, we must have $t_1=t_2$, since $g(s) > 0$ for any $s > 0$, $s\ne 1$.
  \end{proof}

\begin{lemma}
\label{lem14}
Assume that (V0),(VF),(F1),(F4') and (F5) hold. Then any sequence satisfying \eqref{53} is bounded w.r.t. $\|\cdot\|$.
\end{lemma}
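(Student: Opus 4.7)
The plan is to apply inequality \eqref{54} of Lemma \ref{lem13} at the corner $t = 0$ and combine it with the Cerami condition \eqref{53}; the conclusion then follows almost immediately, so the proof will be much shorter than the companion Lemma \ref{lem12}.

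Setting $t = 0$ in \eqref{54} and using $\Phi(0) = 0$, one obtains
\begin{equation*}
\Phi(u_n) \geq \frac{1}{2p}\langle \Phi'(u_n), u_n\rangle + \frac{p-1}{2p}\|u_n\|^2.
\end{equation*}
From \eqref{53}, $\Phi(u_n) \to c_*$, and the dual pairing estimate
$|\langle \Phi'(u_n), u_n\rangle| \leq \|\Phi'(u_n)\|_{X_p'}\|u_n\|_{X_p} \leq \|\Phi'(u_n)\|_{X_p'}(1+\|u_n\|_{X_p}) = o(1)$
yields $\langle \Phi'(u_n), u_n\rangle \to 0$. Rearranging gives $\frac{p-1}{2p}\|u_n\|^2 \leq c_* + o(1)$, and since $\frac{p-1}{2p} > 0$ for every $p \geq 2$, this delivers the uniform bound $\|u_n\| \leq C$.

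The only point that will need comment, and thus the main (mild) obstacle, is justifying \eqref{54} at the boundary value $t = 0$, since Lemma \ref{lem13} writes out the proof for generic $t > 0$. I would verify this by passing to the limit $t \to 0^+$: by (F5) together with the Trudinger--Moser control from Lemma \ref{M-T}, $\int F(x, tu)\,dx \to 0$, hence $\Phi(tu) \to \Phi(0) = 0$; the remaining terms on the RHS of \eqref{54} are explicit polynomials in $t$ whose limits are immediate; and although the inner integrand
$\bigl[\tfrac{f(x,u) - V(x)u}{|u|^{2p-1}} - \tfrac{f(x,su) - V(x)su}{|su|^{2p-1}}\bigr] s^{2p-1}|u|^{2p-1}u$
has a formal $0/0$ at $s = 0$, after cancellation it simplifies to $s^{2p-1}[f(x,u) - V(x)u]u - [f(x,su) - V(x)su]u$, which is continuous in $s$ on $[0,1]$ and vanishes at $s = 0$ (using $f(x,0) = 0$ from (F5)), and remains nonnegative on $(0,1]$ by (F4'). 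Thus the integral over $[0,1]$ is well-defined and nonnegative, and \eqref{54} at $t = 0$ follows from the family of inequalities for $t > 0$ by passing to the limit. The substantive work has already been done in Lemma \ref{lem13}: the monotonicity condition (F4') has been absorbed into the algebraic inequality \eqref{54}, and Lemma \ref{lem14} merely harvests its consequence at the corner $t = 0$.
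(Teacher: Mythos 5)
Your reduction — testing the Cerami sequence with $\Phi(u_n)-\tfrac{1}{2p}\langle\Phi'(u_n),u_n\rangle$ — is exactly the paper's strategy, but routing it through \eqref{54} at $t=0$ hides the one place where the argument can genuinely fail. If you re-derive \eqref{54} as you sketch, the inner $ds$-integral supplied by (F4') is \emph{not} equal to $\tfrac{1-t^{2p}}{2p}f(x,u)u+F(x,tu)-F(x,u)$: a direct computation shows it exceeds that quantity by $\mu\,\tfrac{g(t)}{2p}V(x)u^2$, where $g(t)=t^{2p}-pt^2+p-1$ and $\mu$ is the constant in (F4') (with $\mu=1$ in the $p=2$ branch). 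Carrying this term along, what the monotonicity argument actually proves is
$\Phi(u)\geq\Phi(tu)+\tfrac{1-t^{2p}}{2p}\langle\Phi'(u),u\rangle+\tfrac{g(t)}{2p}\bigl[\|u\|^2-\mu\int_{\R^2}V(x)u^2\,dx\bigr]$,
i.e.\ \eqref{54} as printed has silently dropped the $V$-correction. At $t=0$ (where $g(0)=p-1$), for $p>2$ this gives $\tfrac{(p-1)(1-\mu)}{2p}\|u_n\|^2\leq c_*+o(1)$ since $\mu<1$ — which is precisely what the paper obtains via its pointwise inequality \eqref{55}, $\tfrac{1}{2p}f(x,t)t-F(x,t)\geq\tfrac{\mu(1-p)}{2p}V(x)t^2$. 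So for $p>2$ your argument is correct in substance and essentially the paper's, up to the harmless missing factor $(1-\mu)$.

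The gap is at $p=2$. There $\mu=1$ and the bracket degenerates to $\|\nabla u_n\|_2^2$, so evaluating the (corrected) \eqref{54} at $t=0$ yields only $\tfrac14\|\nabla u_n\|_2^2\leq c_*+o(1)$: a gradient bound, not a bound on $\|u_n\|$. This is exactly why the paper's proof of Lemma \ref{lem14} restricts itself to $p>2$ and refers to \cite[Lemma 2.11]{Chen_Tang2020-2} for $p=2$, where controlling the $\int V u_n^2$ part of the norm requires a separate, more delicate argument. The technical point you flag — justifying the corner value $t=0$ via continuity of the integrand at $s=0$ and $f(x,0)=0$ — is fine and is not where the difficulty lies; the difficulty is that the shortcut through \eqref{54} does not close the $p=2$ case once the dropped $V$-term is restored.
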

\begin{proof}
We only considers the case $p > 2$. For $p=2$, the readers can refer to \cite[Lemma 2.11]{Chen_Tang2020-2}. First, we claim
    \begin{equation}
    \label{55}
    \frac{1}{2p}f(x,t)t-F(x,t)\geq\frac{\mu(1-p)}{2p}V(x)t^2,\;\forall\; t\in\mathbb{R}.
    \end{equation}
    Indeed, by (F4'), there holds
    \begin{align*}
      F(x,t)-\frac{\mu}{2}V(x)t^2&=\int_0^t[f(x,\tau)-\mu V(x)\tau]d\tau\\
      &\leq\int_0^t\frac{f(x,t)-\mu V(x)t}{|t|^{2p-1}}|\tau|^{2p-2}\tau d\tau\\
      &=\frac{f(x,t)t-\mu V(x)t^2}{2p}.
    \end{align*}
    By \eqref{55}, one has
    \begin{align}
      c_\ast+o(1)&=\Phi(u_n)-\frac{1}{2p}\langle \Phi^{\prime}(u_n),u_n\rangle\notag\\
      &=\left(\frac{1}{2}-\frac{1}{2p}\right)\|u_n\|^2+\int_{\mathbb{R}^2}\left(\frac{1}{2p}f(x,u_n)u_n-F(x,u_n)\right)dx\notag\\
      &\geq\left(\frac{1}{2}-\frac{1}{2p}\right)\|u_n\|^2-\left(\frac{1}{2}-\frac{1}{2p}\right)\mu\int_{\mathbb{R}^2}V(x)u_n^2dx\notag\\
      &\geq\frac{(p-1)(1-\mu)}{2p}\|u_n\|^2.
    \end{align}
    Here we used also \eqref{Phi},\eqref{diff} and \eqref{53}.
  \end{proof}

\medskip
\noindent\textbf{Proof of Theorem \ref{thm3} completed.} Applying Lemmas \ref{lem13} and \ref{lem14}, we deduce that there exists a sequence $\{u_n\}\subset E_{k,p}$ satisfying \eqref{53} and $\|u_n\|\leq C < \infty$. Now we prove
\begin{equation}
\label{57}
  \int_{\mathbb{R}^2}f(x,u_n)u_ndx\leq C.
\end{equation}
Indeed, let $p \ge 2$, by \eqref{Phi}-\eqref{diff} and \eqref{53}, there holds
\begin{align*}
  c_\ast+o(1)=&\;\Phi(u_n)-\frac{p+\mu(1-p)}{2p}\langle\Phi^\prime(u_n),u_n\rangle\\
  \geq&\frac{(p-1)\mu}{2p}\|u_n\|^2-\frac{(p-1)\mu}{2p}\int_{\mathbb{R}^2}V(x)u_n^2dx+\frac{(1-p)(1-\mu)}{4p\pi}I_0(u_n)\\
  &+\frac{(p-1)(1-\mu)}{2p}\int_{\mathbb{R}^2}f(x,u_n)u_ndx\\
  \geq&\frac{(p-1)(1-\mu)}{2p}\int_{\mathbb{R}^2}f(x,u_n)u_ndx,
\end{align*}
hence \eqref{57} holds true.
Replacing \eqref{32} by \eqref{57} in the the proof of Theorem \ref{thm1} and proceeding as before, one can conclude the existence of $\bar u \in E_{k, p}\backslash \{0\}$ with $\Phi'(\bar{u})=0$ in $X_p'$. Obviously, $\bar{u}\in\mathcal{N}_1$, which together with $c_{\ast}\leq m_1$ (see Lemma \ref{lem13}) implies that $\Phi(\bar{u})=m_1$. \hfill$\Box$

\section{Further discussions}
\label{further}
In this section, we discuss several open problems above the 2-D Schr\"{o}dinger-Poisson system
\begin{equation*}
\begin{cases}
  -\triangle{u}+V(x)u+ K(x)\phi{(x)}|u|^{p-2}u=f(x,u),&\text{in $\mathbb{R}^{2}$},\\
  \triangle{\phi}=K(x)|u|^{p},&\text{in $\mathbb{R}^{2}$},
\end{cases}
\leqno{(O)}
\end{equation*}
where $V,K\in C(\mathbb{R}^2\backslash\{0\},\mathbb{R}_+),f\in C(\mathbb{R}^2\backslash\{0\}\times\mathbb{R},\mathbb{R})$ and $p>1$. We are interested in the existence of solutions using the variational method.


\medskip
In this work, we considered mainly the case $K(x)\equiv1$, $p \ge 2$ and $f(x,t)$ is of critical growth in $t=\pm\infty$ in Theorems \ref{thm1} and \ref{thm3}, and made some progress comparing to \cite[Theorem 1.4]{Chen_Tang2020-2}, It seems that there are still some rooms for improvement. One idea is to construct better test functions such that Lemma \ref{lem10} holds. Another idea is to use the concentration-compactness principle. For that, we could use the idea coming from \cite{Liu_Radulescu_Tang_Zhang}. In other words, we consider the equation
\begin{equation*}
  -\triangle{u}+V(x)u+ \left[G_\alpha(|\cdot|)\ast\left(K(x)|u|^{p}\right)\right]|u|^{p-2}u=f(x,u)\quad \mbox{in  } \mathbb{R}^2,
\leqno{(O_\alpha)}
\end{equation*}
where $G_\alpha:=\frac{|x|^{-\alpha}-1}{\alpha}$ and $\alpha\in(0,1)$. By the fact $\lim_{\alpha\rightarrow0^+}\frac{|x|^{-\alpha}-1}{\alpha}=-\ln|x|$ for all $x\in\mathbb{R}^2\backslash\{0\}$, we may obtain the solution of the equation $(O)$.

\medskip
As far as we know, there is no existence result for $p \in (1, 2)$ even with $K(x)\equiv1$. Perhaps the crucial point is to prove the compactness of Cerami sequence, see the inequality \eqref{48} for $p \ge 2$.

\medskip
For $K(x)\neq1$, In \cite{Albuquerque_Carvalho_Figueiredo_Medeiros}, Albuquerque, Carvalho, Figueiredo and Medeiros proved recently the existence of ground state of the equation $(O)$ when $V,K$ and $f$ are radial. Therefore, it is natural to explore the general case with non-radial $V,K$ and $f$.

As mentioned in the Introduction, our approach works also for the subcritical case, we omit the proof here since it is very similar to the critical case.
\begin{theorem}
\label{R15}
  Let $p\geq2$, $V$ and $f$ satisfy (V0),(VF),(F1$^\prime$),(F5) and the following condition:
\begin{itemize}
\item[(F4')] $f(x,t)t>0$ for all $(x,t)\in\mathbb{R}^2\times(\mathbb{R}\backslash\{0\})$ and there exists $\mu\in(2,\infty),t_1\in(0,\infty)$ such that
$$f(x,t)t\geq\mu F(x,t),\;\;\forall\;x\in\mathbb{R}^2,|t|\geq t_1;$$
Furthermore, if $p > 2$, we assume that
  \begin{equation*}
    M_{t_1}<\left(\frac{1}{2}-\frac{1}{\mu}\right)\gamma^2
  \end{equation*}
  where $t_1 > 0$,
  \begin{equation*}
    M_{t_1}= \sup_{(x,t)\in\mathbb{R}^2\times[-t_1,t_1]\backslash\{0\}}\frac{F(x,t)}{t^2} \quad \mbox{and} \quad \gamma =\inf_{u\in X_p}{\frac{\|u\|}{\|u\|_{H^1(\mathbb{R}^2)}}}>0.
  \end{equation*}
\end{itemize}
Then $(P)$ has a nontrivial solution $\bar{u}$. Moreover, $\bar{u}\in E_{k,p}$ if $V\in\mathcal{V}_{k,1}$ and
$f\in\mathcal{F}_{k,1}$; $\bar{u}\in T_{k,p}$ if $V\in\mathcal{V}_{k,2}$ and $f\in\mathcal{F}_{k,2}$.
\end{theorem}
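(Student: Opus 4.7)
\textbf{Proof proposal for Theorem \ref{R15}.} The plan is to follow the three-step framework used for Theorem \ref{thm1}, exploiting the simplifications afforded by the subcritical growth hypothesis (F1'). Under (F1'), Lemma \ref{M-T} yields a uniform bound on $\int(e^{\alpha u^2}-1)dx$ for \emph{every} $\alpha > 0$, so no critical threshold $2\pi/\alpha_0$ must be respected; in particular, the delicate construction of the Moser-type function in Lemma \ref{lem10} can be entirely bypassed.

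First I would reproduce the mountain pass geometry of Lemma \ref{lem11}. The local positivity $\Phi|_{\{\|u\|=\rho\}} \geq \kappa_0$ for $\rho$ small follows verbatim from (F1'), (F5) and Lemma \ref{M-T}, just as in \eqref{29}. For the endpoint $e$, the (AR) condition (F4') gives the usual consequence $F(x,t)\geq c_1|t|^{\mu}-c_2$ for $|t|\geq t_1$ with $\mu>2$. Combined with Lemma \ref{lem6} (which forces the coefficient of $t^{2p}$ in $\Phi(t\omega)$ to be non-positive), one obtains $\Phi(t\omega)\to -\infty$ as $t\to+\infty$ for every $\omega\in E_{k,p}\setminus\{0\}$, since either $I_0(\omega)<0$ makes the $t^{2p}$ term dominant or else the $-c_1t^{\mu}\|\omega\|_{\mu}^{\mu}$ term does the job. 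Lemma \ref{lem9} then produces a Cerami sequence $\{u_n\}\subset E_{k,p}$ at a level $\tilde c>0$.

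The boundedness of $\{u_n\}$ in $\|\cdot\|$ replaces Lemma \ref{lem12} and is the main obstacle; it is precisely here that the auxiliary hypothesis on $M_{t_1}$ becomes essential for $p>2$. A direct computation yields
\begin{equation*}
\Phi(u)-\frac{1}{\mu}\langle\Phi'(u),u\rangle = \left(\frac{1}{2}-\frac{1}{\mu}\right)\|u\|^2 + \frac{1}{4p\pi}\left(1-\frac{2p}{\mu}\right)I_0(u) + \int_{\mathbb{R}^2}\left[\frac{1}{\mu}f(x,u)u-F(x,u)\right]dx.
\end{equation*}
Since $f(x,t)t>0$ forces $F\geq 0$, I may shrink $\mu$ and assume without loss that $\mu\in(2,2p]$, whereupon $1-2p/\mu\leq 0$ and Lemma \ref{lem6} makes $(1-2p/\mu)I_0(u)\geq 0$. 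On $\{|u_n|\geq t_1\}$, (F4') gives $\frac{1}{\mu}fu_n-F\geq 0$; on $\{|u_n|\leq t_1\}$, the definition of $M_{t_1}$ gives $F\leq M_{t_1}u_n^2$ and $\frac{1}{\mu}fu_n\geq 0$, so this integral is bounded below by $-M_{t_1}\|u_n\|_2^2\geq -M_{t_1}\gamma^{-2}\|u_n\|^2$. Collecting,
\begin{equation*}
\tilde c+o(1) \geq \left[\left(\frac{1}{2}-\frac{1}{\mu}\right)-\frac{M_{t_1}}{\gamma^2}\right]\|u_n\|^2,
\end{equation*}
and the hypothesis $M_{t_1}<(\tfrac{1}{2}-\tfrac{1}{\mu})\gamma^2$ bounds $\|u_n\|$. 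For $p=2$ this auxiliary condition is unnecessary, because (F5) supplies $f(t)=o(t)$ at the origin and $t_1$ may be chosen so that $M_{t_1}$ is arbitrarily small.

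Finally, Steps 1--3 of the proof of Theorem \ref{thm1} transfer essentially unchanged: the $\|\cdot\|_{*}$-boundedness via Lemma \ref{lem8}, the strong convergence $u_n\to\bar u$ in $E_{k,p}$ along the lines of \eqref{48}--\eqref{49}, and the passage from $\Phi'(\bar u)=0$ in $E_{k,p}'$ to $\Phi'(\bar u)=0$ in $X_p'$ by the finite-group averaging argument for $G_1,G_2\subset\mathcal{O}(2)$. In fact the estimates on $\int|f(x,u_n)(u_n-\bar u)|dx$ become simpler, because Lemma \ref{M-T} with any $\alpha>0$ removes the need for the technical splitting against a critical threshold. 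Thus the only genuinely new work in Theorem \ref{R15} lies in the AR-based boundedness step, tailored to the extra hypothesis on $M_{t_1}$.
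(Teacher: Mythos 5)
Your overall architecture is exactly what the paper intends: the paper gives no proof of Theorem \ref{R15}, stating only that it is ``very similar to the critical case'', and your outline (mountain pass geometry without the Moser functions of Lemma \ref{lem10}, a Cerami sequence from Lemma \ref{lem9}, boundedness via an (AR)-type multiplier, then Steps 1--3 of Section 4) is the natural instantiation of that remark. You also correctly locate the only genuinely new ingredient in the boundedness step. Unfortunately that step, as you wrote it, has two concrete gaps.

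First, the reduction ``I may shrink $\mu$ and assume without loss that $\mu\in(2,2p]$'' is not without loss of generality here. Shrinking $\mu$ to $\mu'\le 2p$ does preserve $f(x,t)t\ge\mu' F(x,t)$ for $|t|\ge t_1$ (since $F\ge 0$), but your final inequality then requires $M_{t_1}<\bigl(\tfrac12-\tfrac{1}{\mu'}\bigr)\gamma^2$, and $\tfrac12-\tfrac{1}{\mu'}<\tfrac12-\tfrac1\mu$, so the theorem's hypothesis does not transfer: when $p>2$, $\mu>2p$ and $M_{t_1}\in\bigl[(\tfrac12-\tfrac1{2p})\gamma^2,\,(\tfrac12-\tfrac1\mu)\gamma^2\bigr)$, your bracket $\bigl[(\tfrac12-\tfrac{1}{\mu'})-M_{t_1}\gamma^{-2}\bigr]$ is nonpositive and the estimate yields nothing; if instead you keep the original $\mu>2p$, the coefficient $1-2p/\mu$ of $I_0(u_n)$ is positive and Lemma \ref{lem6} then gives this term the wrong sign. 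You must either use the multiplier $\tfrac{1}{2p}$ (which removes the $I_0$ term but needs $M_{t_1}<(\tfrac12-\tfrac1{2p})\gamma^2$) or explain how the stated hypothesis covers this regime. Second, for $p=2$ the assertion that ``$t_1$ may be chosen so that $M_{t_1}$ is arbitrarily small'' is false: $t_1$ is the threshold furnished by (F4'), and replacing it by a smaller $t_1'$ destroys the inequality $f(x,t)t\ge\mu F(x,t)$ on $t_1'\le|t|\le t_1$. Condition (F5) controls $F(x,t)/t^2$ only as $t\to 0$, not uniformly on $[-t_1,t_1]$; on the intermediate set $\{\delta\le|u_n|\le t_1\}$ the obvious bounds produce either $\|u_n\|_s^s$ with $s>2$ or a constant multiple of $\|u_n\|_2^2$ that need not be small relative to $\tfrac12-\tfrac1\mu$, so the $p=2$ case requires a genuinely different argument, and your one-line dismissal does not supply it.
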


Here $\|\cdot\|$ is defined by \eqref{3}. The fact $\gamma > 0$ was proved by \cite[Lemma 2.1]{Chen_Tang2020-1}. If we suppose that $f(x, t) = 0$ for $t \leq 0$, and $f(x, t)$ is subcritical at $t\to \infty$, our proof will provide us a positive solution, roughly speaking, we can replace $\bar{u}$ with $|\bar{u}|$ in variational approach, the similar conclusion is valid also for the critical case, namely Theorem \ref{thm1}.

\medskip
Recently, Liu, R\u{a}dulescu, Tang and Zhang \cite{Liu_Radulescu_Tang_Zhang} considered the equation $(O)$ with $V=K \equiv 1$ and $f(x, u) = f(u)$.
They proved that the equation $(O)$ has a positive solution by assuming that:
\begin{itemize}
\item[(SF1)] $f\in C^1(\mathbb{R},\mathbb{R}^+)$, $f(s)\equiv0$ for $s\leq0$ and $f(s)>0$ for $s>0$;
\item[(SF2)] For every $\theta>0$, there exists $C_\theta>0$ such that $f(s)\leq C_\theta\min\{1,s\}e^{\theta s^2}$, $\forall\; s>0.$
\item[(SF3)] $\frac{f(s)}{s^{3}}$ is non-decreasing for $s>0$.
\end{itemize}

Clearly, the condition $(SF2)$ implies that $f$ is subcritical at $t = \infty$, and $(SF3)$ yields the $(AR)$ condition and $f(s) = O(s^3)$ at the origin. We can remark that Theorem \ref{R15} generalized the above result and also \cite[Theorem 1.2]{Chen_Tang2020-2}, \cite[Theorem 1.1]{Cao_Dai_Zhang}.


\section{Appendix}
In this section, we will give the proof of Lemma \ref{lem5}. In fact, we just need to prove $\Psi\in C^{1}(X_p,\mathbb{R})$, and
    the readers can refer to \cite[Lemma 2.3]{Cao_Dai_Zhang} for the rest. First, Given any $u,v\in X_p$, for almost every $x\in\mathbb{R}^2$
    $$\lim_{t\rightarrow0}\frac{F(x,u(x)+tv(x))-F(x,u(x))}{t}=f(x,u(x))v(x).$$
    On the other hand, we can choose large enough number $t_1>0$ such that
    $$|f(x,t)|\leq e^{(\alpha_0+1)t^2}-1,\;\forall\;|t|\geq t_1.$$
    By (F1),(F5) and Lemma \ref{M-T}, one has
    \begin{align*}
      \int_{\mathbb{R}^2}|f(x,u)|^2dx&=\int_{\{|u|\leq t_1\}}|f(x,u)|^2dx+\int_{\{|u|\geq t_1\}}|f(x,u)|^2dx\\
      &\leq C\|u\|_2^2+\int_{\mathbb{R}^2}\left(e^{(\alpha_0+1)u^2}-1\right)dx\\
      &\leq C.
    \end{align*}
    Then, $f(x,u)\in L^2(\mathbb{R}^2)$, $\forall\;u\in X_p,$ This will imply that the Gateaux derivative $\Psi_g^{\prime}(u)$ exists and $\Psi_g^{\prime}(u)\in X_p^{\prime}$.

    Let now $\{u_n\}\subset X_p$, $\|u_n-\bar{u}\|_{X_p}\rightarrow0$. Hence $u_n\rightarrow\bar{u}$ in $H^1(\mathbb{R}^2)$.
    Let us prove $\Psi'(\bar u) =\lim_{n\to \infty}\Psi'(u_n)$. It suffices to prove
    $$\lim_{n\rightarrow\infty} \sup_{\|v\|_{X_p} = 1} \left|\int_{\mathbb{R}^2}[f(x,u_n)-f(x,\bar{u})]vdx\right| =0.$$
    Defining $M:=\sup_{n}\|\nabla u_n\|_2$, then we will prove this Lemma in two cases.

\medskip
    \noindent\textbf{Special case: $M\leq\sqrt{\frac{\pi}{2\alpha_0}}$}.

    For any given $\varepsilon\in(0,1)$, we choose $R_{1}^{\varepsilon}=\exp{(1/\varepsilon^p)}-1>0$, then
     $$\|v\|_{L^p(B_R^c)}\leq\|v\|_{L^p(B_{R_{1}^{\varepsilon}}^c)}=\varepsilon[\ln(1+R_{1}^{\varepsilon})]^{1/p}\|v\|_{L^p(B_{R_{1}^{\varepsilon}}^c)}\leq\varepsilon\|v\|_{X_p},\;\forall\;R\geq R_{1}^{\varepsilon}.$$
    Furthermore, there is a $R_{2}^{\varepsilon}>0$ such that
    $$\|\bar{u}\|_{L^2(B_R^c)}\leq\|\bar{u}\|_{L^2(B_{R_{2}^{\varepsilon}}^c)}\leq\varepsilon,\;\forall\;R\geq R_{2}^{\varepsilon}.$$
    Let $R_\varepsilon=R_{1}^{\varepsilon}+R_{2}^{\varepsilon}$, by Lemma \ref{M-T} and $\alpha_0\le \frac{\pi}{2M^2}$, it is easy to verify that there is a $C_0>0$ such that $\|u\|_4\leq C_0\|u\|_{X_p}$ for all $u\in X_p$ and
    $$\int_{B_{R_\varepsilon}}|f(x,u_n)|^2dx + \int_{B_{R_\varepsilon}}|f(x,u_n)|^2|u_n|dx\leq C,\;\;\forall\;n.$$
    Now we claim that
    \begin{equation*}
    \limsup_{n\rightarrow\infty}\left\{\|f(x,u_n)-f(x,\bar{u})\|_{2} + \|u_n-\bar{u}\|_2\right\}\leq C\varepsilon,
    \eqno{(\ast\ast)}
    \end{equation*}
    where $C$ is independent of $n$.
    The proof of $(\ast\ast)$ comes from \cite[Lemma 2.1]{Figueriredo_Miyagaki_Ruf}. As $L^2(B_{R_\varepsilon})$ is a Hilbert space, we need only to prove
    $$\limsup_{n\rightarrow\infty}\int_{B_{R_\varepsilon}}(|f(x,u_n)|^2-|f(x,\bar{u})|^2)dx\leq C\varepsilon$$
  for the first part of $(**)$ and $C$ is independent of $n$.  Let $M^{\prime}$ be large enough such that
  $$\int_{\{|u_n|\geq M^{\prime}\}\cap B_{R_\varepsilon}}|f(x,u_n)|^2dx=\int_{\{|u_n|\geq M^{\prime}\}\cap B_{R_\varepsilon}}\frac{|f(x,u_n)|^2|u_n|}{|u_n|}dx\leq\frac{C_0}{M^{\prime}}\leq\varepsilon.$$
  By the dominated convergence and Fatou Lemma, one has
  \begin{align*}
  &\left|\int_{B_{R_\varepsilon}}(|f(x,u_n)|^2-|f(x,\bar{u})|^2)\right|\\
  &\leq\int_{\{|u_n|\geq M^{\prime}\}\cap B_{R_\varepsilon}}\frac{|f(x,u_n)|^2|u_n|}{M^{\prime}}dx+\int_{\{|u|\geq M^{\prime}\}\cap B_{R_\varepsilon}}\frac{|f(x,u_n)|^2|\bar{u}|}{M^{\prime}}dx\\
  &+\int_{B_{R_\varepsilon}}h_n(x)dx\\
  &=2\varepsilon+o_n(1),
  \end{align*}
  where $h_n(x):=\left||f(x,u_n(x))|^2\chi_{\{|u_n|<M^{\prime}\}\cap B_{R_\varepsilon}}-|f(x,\bar{u}(x))|^2\chi_{\{|u_n|<M^{\prime}\}\cap B_{R_\varepsilon}}\right|$ and we use the fact
  \begin{equation*}
    |h_n(x)|\leq
    \begin{cases}
    |f(x,\bar{u}(x))|^2,&\text{$|u_n|\geq M^{\prime}$},\\
    \sup\{|f(x,t)|:x\in\overline{B_{R_\varepsilon}},|t|<M^{\prime}\}+|f(x,\bar{u}(x))|^2,&\text{$|u_n|<M^{\prime}$}.
    \end{cases}
  \end{equation*}
  Therefore, we get $(\ast\ast)$. By $(\ast\ast)$, for large $n$, one has
    \begin{align*}
      &\left|\int_{\mathbb{R}^2}[f(x,u_n)-f(x,\bar{u})]vdx\right|\\
      \leq&\int_{B_{R_\varepsilon}}|f(x,u_n)-f(x,\bar{u})||v|dx+\int_{B_{R_\varepsilon^c}}|f(x,u_n)-f(x,\bar{u})||v|dx\\
      \leq&\frac{1}{\gamma}\varepsilon\|v\|_{X_p}+\frac{C}{\gamma}(\|u_n-\bar{u}\|_2+2\|\bar{u}\|_{L^2(B_{R_\varepsilon^c})})\|v\|_{X_p}\\
      &+2^{2/p}C\left(\int_{\mathbb{R}^2}\left[\exp\left(p'\alpha u_n^2\right)-1+\exp\left(p'\alpha \bar{u}^2\right)-1\right]dx\right)^{1/p'}\varepsilon\|v\|_{X_p}\\
      \leq&C\varepsilon\|v\|_{X_p},
    \end{align*}
    where $\frac{1}{p}+\frac{1}{p'}=1$ and $\gamma:=\inf_{u\in X}{\frac{\|u\|}{\|u\|_{H^1(\mathbb{R}^2)}}}>0$ (see \cite[Lemma 2.1]{Chen_Tang2020-1}).

\medskip\noindent    \textbf{General case: $M > 0$}.

    We can choose big enough bounded domain $B_{R}(0)$ and its bounded open coverage $\{\Omega_\ell\}_{\ell \le N_c}$ which has a partition of unity $w_\ell \;(1 \le \ell \le N_c)$ such that\\
    $$B_{R}(0)\subset\bigcup_{1 \le \ell \le N_c}\Omega_\ell,\;\sum_{\ell=1}^{N_c}w_\ell (x)=1,\;\forall\;x\in B_{R}(0);$$
    $$w_\ell \in C_c^1(\Omega_\ell),\;|\nabla w_\ell|\leq C,\;\forall \;\ell$$
    $$\int_{B_{R}^c(0)}|\nabla\bar{u}|^2dx\leq\frac{\pi}{4\alpha_0},\;\int_{B_{R}^c(0)}|\nabla u_n|^2dx\leq\frac{\pi}{4\alpha_0},\;\;\forall\;n;$$
    $$\int_{\Omega_\ell}|\bar{u}|^2dx\leq\frac{\pi}{8\alpha_0},\;\int_{\Omega_\ell}|u_n|^2dx\leq\frac{\pi}{8\alpha_0},\;\;\forall\;n, \ell;$$
    $$\int_{\Omega_\ell}|\nabla\bar{u}|^2dx\leq\frac{\pi}{8\alpha_0},\;\int_{\Omega_\ell}|\nabla u_n|^2dx\leq\frac{\pi}{8\alpha_0},\;\;\forall\;n, \ell.$$
    Repeating now the proof of the special case,  we can claim
    $$\int_{B_{R}^c(0)}\big|[f(x,u_n)-f(x,\bar{u})]v\big|dx\leq C\varepsilon\|v\|_X$$
    and
    $$\int_{\Omega_\ell}\left|[f(x,u_n)-f(x,\bar{u})]v\right|dx\leq C\varepsilon\|v\|_X,\;\;\forall\;l.$$
    Therefore, one has
    \begin{align*}
      &\left|\int_{\mathbb{R}^2}[f(x,u_n)-f(x,\bar{u})]vdx\right|\\
      \leq&\int_{B_{R}^c(0)}\left|(f(x,u_n)-f(x,\bar{u}))v\right|dx+\sum_{l=1}^{N_c}\int_{\Omega_i}\left|(f(x,u_n)-f(x,\bar{u}))v\right|dx\\
      \leq&(N_c+1)C\varepsilon\|v\|_X.
    \end{align*}
The continuity of Gateaux derivatives means that the functional is continuous differentiable. \hfill $\Box$

\bigskip
\noindent\textbf{Acknowledgements.}

The author would like to express my sincere gratitude to his tutors, Professors Dong Ye and Feng Zhou for their valuable guidance which provided insights that helped him to improve the paper. On the other hand, I would like to thank my doctoral classmates at ECNU, Houzhi Tang, Yuhao Yan, Weiming Zhang and Shitao Liu.


\bibliographystyle{spmpsci}      

\end{document}